\newcommand{\ld}{\mbox{$L^2 (\R) $ }}
\newcommand{\ldg}{\mbox{$L^2 (G) $ }}
\newcommand{\R}{\mbox{${\mathbb{R}}$}}
\newcommand{\N}{\mbox{${\mathbb{N}}$}}
\newcommand{\Z}{\mbox{${\mathbb{Z}}$}}
\newcommand{\C}{\mbox{${\mathbb  C}$}}
\newcommand{\tends}[1]{\mbox{\space \raise-2mm
\hbox{$\textstyle\longrightarrow\atop\scriptstyle {#1}$} \space}}
\newtheorem{theorem_intro}{Theorem}
\newtheorem*{theorem_h}{Theorem}
\newtheorem{thm}{Theorem}[section]
 \newtheorem{coro}[thm]{Corollary}
 \newtheorem{lem}[thm]{Lemma}
 \newtheorem{prop}[thm]{Proposition}
 \theoremstyle{definition}
 \newtheorem{defn}[thm]{Definition}
 \theoremstyle{remark}
 \newtheorem{rem}[thm]{Remark}
 \newtheorem*{ex}{Example}
 \newtheorem{notat}[thm]{Notation}
\begin{document}
\begin{frontmatter}
\title{Linear independence of translates implies linear independence of
affine Parseval frames on LCA groups }
\author{Sandra Saliani}
\address{Dipartimento di Matematica, Informatica ed Economia\\ Universit\`{a} degli Studi
 della Basilicata\\ Viale dell'ateneo lucano 10, 85100 Po\-ten\-za, ITALY}
\ead{sandra.saliani@unibas.it}

\begin{abstract}
Motivated by Bownik and Speegle's result on linear independence of wavelet
Parseval frames, we consider affine systems (analogous to wavelet systems)
 as a result of an action of a  locally compact abelian (LCA)
group on a separable Hilbert space ${\mathcal H}$.
Compared with the wavelet setting, translations are replaced by the action of a countable,
discrete subgroup $\Gamma$ of a second countable, locally compact abelian
group  $G$, acting as  a group of unitary  operators on ${\mathcal H}$; dilations
 are replaced by integer powers of a unitary
operator $\delta$ onto ${\mathcal H}$. We show that, under some compatibility
conditions between $\delta$ and the action of the group $\Gamma$, the linear
independence of the translates of any function in $L^2(G)$ by elements of $\Gamma$
implies the linear
independence of such affine Parseval frames in ${\mathcal H}$.
\end{abstract}

\begin{keyword} LCA group\sep Parseval frame\sep range function \sep multiplicity function.

\MSC 42C40
\end{keyword}

\end{frontmatter}

\section{Introduction}
The study of  doubly invariant subspaces of measurable vector functions defined in
the unit circle and taking values in a separable Hilbert space, as developed by
Helson in \cite{H64}, has been retrieved in the context of shift invariant
subspaces of $L^2(\R^n)$  by
de Boor, DeVore, Ron \cite{BoDeRo}, and Bownik \cite{Bo},
 leading to what is commonly known as Helson's theorem.

A shift invariant subspace of $L^2(\R^n)$ is any closed subspace
$V\subset L^2(\R^n)$ which is closed under integer shifts, i.e. such that
$f\in V$ implies $T_k f=f(\cdot-k)\in V,$ for all $k\in\Z^n.$
 The Fourier transform is defined as $\hat{f}(\xi)=\int_{\mathbb{R}^n}{f(x)\, e^{-ix\cdot \xi}\,dx}$.
\begin{theorem_h}[Helson]
A closed subspace $V\subset L^2(\R^n)$ is shift invariant if and only if
$$V=\{f\in L^2(\R^n),\, (\hat{f}(\xi-2\pi k))_{k\in \mathbb{Z}^n}\in J(\xi),\,
\text{for a.e.}\; \xi\in\mathbb{T}^n\},$$
where $J$ is a measurable range function
$$J:\mathbb{T}^n\rightarrow\{\text{closed subspaces of}\;\,\ell^2(\Z^n)\}.$$

The correspondence between $V$ and $J$ is one-to-one, under the convention that
the range functions are identified if they are equal a.e.. Furthermore, if
$$
V=\overline{\text{span}}\{T_k\varphi,\; k\in \Z^n, \varphi\in \mathscr{A}\},
$$
for an at most countable set $\mathscr{A}\subset L^2(\R^n),$   then,
$$J(\xi)=\overline{\text{span}}\{(\hat{\varphi}(\xi-2\pi k))_{k\in \mathbb{Z}^n}
,\;  \varphi\in \mathscr{A}\},\quad\text{a.e.}\,\xi\in\mathbb{T}^n.$$
\end{theorem_h}

 This result has led to significant progress in the study of linear independence of
 wavelet systems (see  the work by Bownik and Speegle \cite{BoSp}). Given
 $\psi\in L^2(\R^n)$, a wavelet system  in $\R^n$ is
 $$|\det A|^{j/2} \, \psi(A^j x-k),\quad x\in\R^n,\; k\in\Z^n,\; j\in\Z,$$
 where $A$ is an $n\times n$ integer-valued, non-singular, expansive  matrix.

 Helson's theorem  has been generalized to  locally compact abelian (LCA) groups, by
various authors:  Kamyabi Gol and Raisi Tousi  \cite{KaRa},\cite{KaRa10},
Cabrelli and Paternostro \cite{CaPa}, Bownik and Ross \cite{BoRo}.
 In all these works, albeit with several distinctions, a subgroup  $\Gamma$ of a
 LCA group $G$ acts as a group of  translations on $L^2(G).$
It is also worth to mention the work by Barbieri, Hern\'{a}ndez, and
Paternostro \cite{BHP}, on characterization of invariant spaces
in terms of range functions and a suitable  generalized Zak transform, as well as  Iverson's
work \cite{I} in the same direction.

One of the aims of this work is to extend Helson's theorem in case $G$ is a
LCA, second countable, Hausdorff group and $\Gamma $ is
a closed countable subgroup of  $G,$ with compact dual group
of characters $\widehat\Gamma,$ which acts as a group of unitary operators on
a separable Hilbert space ${\mathcal H},$
meaning
that there is a unitary representation $$\pi:\Gamma\rightarrow
{\mathcal{U}}({\mathcal H}).$$
We do not require $\Gamma$ to be co-compact, but
we assume that the measure $\mu$ on $\widehat\Gamma,$
arising from the spectral theorem applied to the representation $\pi,$ is
absolutely continuous (with respect to the Haar measure).

We obtain in Theorem \ref{hel} a characterization of $\pi$-invariant subspaces (i.e. closed
subspaces $V\subset {\mathcal H}$
such that $\pi(\gamma)(V)\subset V$, for all
$\gamma\in\Gamma$) in terms of the
range function
$$J:\widehat{\Gamma}\rightarrow \{\text{closed subspaces of}\; \ell^2(\Gamma)\}.$$

In the case of translations on $L^2(G)$, the same has been achieved  for uniform lattices in $\R^n$ by Bownik, \cite{Bo}, for uniform lattices in LCA group by Cabrelli and Paternostro, \cite{CaPa}, and for co-compact,
but not necessarily discrete, subgroups by Bownik and Ross \cite{BoRo}.
We stress that, in the opposite,  our generalization applies to discrete, but
not necessarily co-compact, subgroups: as an example one can think of $\Gamma=\Z\times \{0\}$
in the additive group $G=\R\times\Z.$

Our result is then applied to  study
affine systems
in ${\mathcal H}$, (analogous to wavelet systems)
\begin{equation}\label{affine0}
\{\delta^j\;\pi(\gamma)\,f,\quad \gamma\in \Gamma,\;
j\in\mathbb{Z}\},
\end{equation}
 where $f\in {\mathcal H}$ is fixed.
 Here the action of  $\Gamma$
as  a group of unitary  operators on
${\mathcal H}$ replaces integer translations;  integer powers of a unitary
operator $\delta$ onto ${\mathcal H}$ replace dilations. We require that $\delta$
verifies a  compatibility condition with the representation $\pi$
described below.

We suppose that there is  a one to one endomorphism
$\alpha:\Gamma\rightarrow\Gamma$ such that the
subgroup $\alpha(\Gamma)$ has finite index in $\Gamma$, and the
dual endomorphism $\alpha^{\ast}$ onto $\widehat{\Gamma}$ defined by
$[\alpha^{\ast}(\chi)](\gamma)=\chi(\alpha(\gamma))$ is ergodic
with respect to Haar measure
($\bigcap_{n\geq
1}{\alpha^n(\Gamma)=\{0\}}$ is enough to assure ergodicity \cite{Ba4}). Moreover we assume
the following compatibility condition holds
$$
\delta^{-1}\pi(\gamma)\delta=\pi(\alpha(\gamma)),
\quad \text{for all}\; \gamma\in\Gamma.
$$

Affine systems  include wavelet  systems for a particular choice of
the group, the representation, and the map $\alpha;$ for example, in
one dimension, one could take $G=\R,$ $\Gamma=\Z,$ ${\mathcal H}=\ld$,
$\pi(k)=T_k,$ $\delta=D_2$ (dilation by $2$), and $\alpha(k)=2k.$

This approach has been used by Baggett and his collaborators in \cite{Ba}, in the
context of the GMRA.  By the spectral theorem of Stone and von Neumann,
in conjunction with the characterization of projection valued measure, any
subrepresentation of $\pi$, arising from an invariant subspace, is realized as a 
direct integral.
We obtain in Lemma \ref{formulaB} that the multiplicity functions associated
with the subrepresentations of $\pi$ on an invariant subspace $V$ and its
``dilation'' $\delta(V)$ verify the same relation obtained by Bownik and
Rzeszotnik in the case
$G=\R^n$,\cite[Corollary 2.5]{BoRz},
for the corresponding dimension functions. It is worth to recall that the dimension
function of an invariant subspace $V$ is defined as
$\dim_V(\xi)=\dim J(\xi)$, where $J$ is the range function provided by the 
generalization of Helson's theorem. 

In the while, in Proposition \ref{dimult}, we  prove a relation  between the multiplicity function associated
with the subrepresentation of $\pi$ on an invariant subspace $V$ and the dimension function $\dim_V$. To our knowledge, this result is not known in the literature.

All these results allows us to prove, by extending the technique used in the work by Bownik and Speegle, \cite{BoSp},
our main result: 
every  affine Parseval frame \eqref{affine0} in ${\mathcal H}$  is linearly
independent provided
the translations of each function in $L^2(G)$ by elements of $\Gamma$ are
linearly independent.

This last assumption is verified by a large class of groups
including the case $G=\R^n$, and $\Gamma=\Z^n$  considered in \cite{BoSp}.
It is verified also by  LCA groups $G$ without non-trivial
  compact subgroups, since then a much stronger property holds: by a standard argument with Fourier transforms \cite[Theorem 1.2]{EdRo79},
 any function in $L^2(G)$ has linearly independent translates  
(see \cite[Corollary 24.35]{HeRo} for a characterization of such LCA groups). 
See also the work
 of Rosenblatt \cite{Ro08} for a discussion on this topic.

 On the opposite, the assumption  is not verified in the
case of finite groups.

We recall that a sequence $(f_n)_{n\in \Z}$  in a Hilbert space ${\mathcal H}$ is {\em linearly independent}
 if every finite subsequence
of $(f_n)_{n\in \Z}$ is linearly independent, i.e.
$$c_n\in \C, \; n\in F \;\text{finite},\quad\sum_{n\in F}{c_n\, f_n}=0\Rightarrow \;c_n=0,\quad \text{for all}\; n\in F.$$

We decided to label the crucial hypothesis, since we shall assume it several times:
\begin{itemize}
\item[{\bf{(A)}}] For any $ 0\neq f\in L^2(G),$ the sequence of translates
$ \{T_{\gamma}f, \,\gamma\in\Gamma\}$
is linearly independent.
\end{itemize}

As it will be shown in Section \ref{main}, our hypotheses on $\Gamma$ and $\mu$
guarantee that linear independence of translates implies  the linear independence
of the sequence $ \{\pi(\gamma)\psi, \,\gamma\in\Gamma\}$, for any $ 0\neq \psi\in {\mathcal H}.$

 Before stating the main results, we need the definition of a (Parseval) frame.
Frames in a separable Hilbert space provide redundant but stable expansions for
elements of the space itself. Frames play key roles in many settings, such as sampling theory,
wavelet analysis, and time-frequency (Gabor) analysis.
We say that a sequence $(f_n)_{n\in \Z}$ in a Hilbert space ${\mathcal H}$ is a frame if there exist constants
$A,B>0$ such that
$$
A\|f\|^2\leq \sum_{n\in \mathbb{Z}}{|<f,f_n>|^2}\leq B \|f\|^2, \quad \text{for all}\; f\in {\mathcal H}.
$$
If $A=B,$ we say that $(f_n)_{n\in \Z}$ is  a tight frame, if $A=B=1,$
a Parseval frame.

The main results of the paper are
\begin{theorem_intro}\label{main1}
Assume hypothesis {\bf{(A)}}.
Suppose $0\neq \psi\in {\mathcal H}$ and its space of negative dilates
$V_0=\overline{\text{span}}\{\delta^j \,\pi(\gamma)\,\psi,\; {j<0},\, \gamma\in\Gamma\}$
 is $\pi$- invariant.

Then
the affine system
$\{\delta^j\, \pi(\gamma)\,\psi, \;{j\in\Z},\, \gamma\in\Gamma\}$
is linearly independent.
\end{theorem_intro}

\begin{theorem_intro}\label{main2}
Assume hypothesis {\bf{(A)}}.
Suppose $0\neq \psi\in {\mathcal H}$. If
the affine system
$\{\delta^j \pi(\gamma)\,\psi, j\in\Z,\, \gamma\in\Gamma\}$
is a Parseval frame, then it is linearly independent.
\end{theorem_intro}

(It is worth to notice that all results apply to the  case
${\mathcal H}=\ldg,$ since
hypotheses on $G$ imply that $\ldg$ is separable).

We tried to separate those results that do not need neither all the machinery of
representation theory of LCA groups, nor the characterization of $\pi$-invariant spaces
 by Helson from those who do. So after the main  hypotheses  in Section
\ref{Hypo}, we state a first result on linear independence in Section \ref{BS}.
In Section \ref{inv} we extend Helson's theorem to $\pi$-invariant spaces,
and in Section \ref{smult} we prove the main properties of the multiplicity
function. The proof of Theorem \ref{main1} and Theorem \ref{main2} are given in
Section \ref{main} together with a non trivial example.

\section{Hypotheses, notations and useful results}\label{Hypo}
In this section we collect all the hypotheses and notations  needed in this paper. We assume that
$G$ is a locally compact  abelian (LCA), second countable, Hausdorff  group 
 and
$\Gamma\subset G$ is a (closed) countable discrete subgroup of  $G$ with (compact) dual group
of characters
$\widehat\Gamma$. We do not require $\Gamma$ to be co-compact. Note that $\widehat{\Gamma}$ is compact and metrizable, hence separable and second countable.

We suppose that there is a one to one endomorphism $\alpha:\Gamma\rightarrow\Gamma$ such that the
subgroup $\alpha(\Gamma)$ has finite index in $\Gamma$, i.e. the quotient group
$$\Gamma/{\alpha(\Gamma)}$$
has a  finite number of elements, say $N>1$.

We consider the dual endomorphism onto $\widehat{\Gamma},$
$\alpha^{\ast}:\widehat{\Gamma}\rightarrow \widehat{\Gamma},$ defined, in any character
$\chi\in\widehat{\Gamma},$ by
$\alpha^{\ast}(\chi)=\chi\circ\alpha.$ Note that $|\ker{\alpha^{\ast}}|=N.$
We assume that $\alpha^{\ast}$ is ergodic with
respect to the normalized Haar measure $\lambda$ on $\widehat{\Gamma}$.
For example, the latter is verified whenever
 $\bigcup_{n\geq 1}{\ker{{\alpha^{\ast}}^n}}$ is dense in $\widehat{\Gamma}$, which
is equivalent to require $\bigcap_{n\geq
1}{\alpha^n(\Gamma)=\{0\}},$ \cite{Ba4}.

We assume
$$\pi:\Gamma\rightarrow \mathcal{U}({\mathcal H}),$$
is a unitary representation of $\Gamma$ on ${\mathcal H}$
and $\delta:{\mathcal H}\rightarrow{\mathcal H}$  a unitary operator verifying the following relation
\begin{equation}\label{Ba}
\delta^{-1}\pi(\gamma)\delta=\pi(\alpha(\gamma)),
\quad \text{for all}\; \gamma\in\Gamma.
\end{equation}
It follows that
\begin{equation}\label{Ba1}
\pi(\gamma)\delta^j=\delta\pi(\alpha(\gamma))\delta^{j-1}\dots=
\delta^j\pi(\alpha^j(\gamma)),\quad j\geq 1,
\quad \text{for all}\; \gamma\in\Gamma.
\end{equation}

For any given $\sigma$-finite measure $\mu$ on $\widehat{\Gamma},$
by $L^2(\widehat{\Gamma},\ell^2(\Gamma),\mu)$ we mean the Hilbert space of
(equivalence class of)  vector
functions $F$ defined on $\widehat{\Gamma}$, attaining values
in $\ell^2(\Gamma),$ which are measurable and square integrable with respect to the measure $\mu,$ i.e.
such that
$$\|F\|_2=\left( \int_{\widehat{\Gamma}}{\|F(\chi)\|_{\ell^2{(\Gamma)}}\; d\mu(\chi)}\right)^{1/2}<+\infty.$$

The scalar product in $L^2(\widehat{\Gamma},\ell^2(\Gamma),\mu)$ is given by
$$(F,G)=\int_{\widehat{\Gamma}}{(F(\chi),G(\chi))\; d\mu(\chi)},$$
where the inner product inside the integral is the one in $\ell^2{(\Gamma)}$.

We  recall the spectral theorem (see \cite[Theorem 4.44]{F}) 
and some of its consequences we shall need in the paper.
\begin{thm}\label{spe}
Let $\pi:\Gamma\rightarrow\mathcal{U}({\mathcal H}),$
be a unitary representation of a  locally compact abelian group $\Gamma$ on 
the Hilbert space ${\mathcal H}$.
Then there exists a projection valued measure $\Pi$ on $\widehat{G}$ such that
$$\pi(\gamma)=\int_{\widehat{G}}<\gamma,\chi>\, d\Pi(\chi),\quad 
\text{for all}\; \gamma\in\Gamma.$$
\end{thm}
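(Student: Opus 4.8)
The statement is the Stone--Naimark--Ambrose--Godement (SNAG) theorem specialised to the discrete group $\Gamma$, and the plan is to follow the standard route through the group $C^{\ast}$-algebra, reducing the claim to the spectral theorem for representations of commutative $C^{\ast}$-algebras. Since $\Gamma$ is discrete, $\pi$ extends to a nondegenerate $\ast$-representation $\widetilde{\pi}$ of the convolution algebra $\ell^{1}(\Gamma)$, sending $\delta_{\gamma}$ to $\pi(\gamma)$; as $\Gamma$ is abelian, the Gelfand (Fourier) transform identifies $C^{\ast}(\Gamma)$ with $C(\widehat{\Gamma})$, which is unital precisely because $\widehat{\Gamma}$ is compact. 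Thus $\widetilde{\pi}$ becomes a nondegenerate $\ast$-representation $\rho$ of $C(\widehat{\Gamma})$ on $\mathcal{H}$, with $\rho(\chi\mapsto\langle\gamma,\chi\rangle)=\pi(\gamma)$, since the Gelfand transform of $\delta_{\gamma}$ is exactly the character function $\chi\mapsto\langle\gamma,\chi\rangle$.

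I would then invoke the spectral theorem for commutative $C^{\ast}$-algebras: every nondegenerate $\ast$-representation of $C(\widehat{\Gamma})$ has the form $\rho(\varphi)=\int_{\widehat{\Gamma}}\varphi\,d\Pi_{\Gamma}$ for a unique regular projection valued measure $\Pi_{\Gamma}$ on $\widehat{\Gamma}$. Taking $\varphi=\langle\gamma,\cdot\,\rangle$ --- which lies in $C(\widehat{\Gamma})$ precisely because $\widehat{\Gamma}$ is compact, so that no approximation of the integrand is needed --- yields $\pi(\gamma)=\int_{\widehat{\Gamma}}\langle\gamma,\chi\rangle\,d\Pi_{\Gamma}(\chi)$. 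The step I expect to be the main obstacle is the spectral theorem itself, namely verifying that the operators $\Pi_{\Gamma}(E)$ are genuine orthogonal projections and that $E\mapsto\Pi_{\Gamma}(E)$ is countably additive and multiplicative. If one prefers to build $\Pi_{\Gamma}$ by hand rather than cite this theorem, the operators arise from Bochner's theorem (each $\gamma\mapsto\langle\pi(\gamma)\xi,\xi\rangle$ is continuous and positive definite, hence the transform of a finite positive measure $\mu_{\xi}$ on $\widehat{\Gamma}$; polarisation gives sesquilinear forms $\mu_{\xi,\eta}(E)$ defining $\Pi_{\Gamma}(E)$), and it is exactly the idempotent and multiplicative properties that require the uniqueness part of Bochner's theorem together with the commutation relations $\pi(\gamma)\Pi_{\Gamma}(E)=\Pi_{\Gamma}(E)\pi(\gamma)$.

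Finally, to obtain a measure on $\widehat{G}$ as in the statement, I would transport $\Pi_{\Gamma}$ along the dual of the inclusion $\Gamma\hookrightarrow G$. This dual is the restriction map $r:\widehat{G}\to\widehat{\Gamma}$, $r(\chi)=\chi|_{\Gamma}$, which is continuous and surjective since $\Gamma$ is closed; as $\widehat{G}$ and $\widehat{\Gamma}$ are second countable and locally compact, hence Polish, a measurable selection theorem provides a Borel section $s:\widehat{\Gamma}\to\widehat{G}$ with $r\circ s=\mathrm{id}$. Then $\Pi=s_{\ast}\Pi_{\Gamma}$, defined by $\Pi(B)=\Pi_{\Gamma}(s^{-1}(B))$, is a projection valued measure on $\widehat{G}$, and since $\langle\gamma,s(\omega)\rangle=\langle\gamma,r(s(\omega))\rangle=\langle\gamma,\omega\rangle$ for all $\gamma\in\Gamma$, the change of variables gives
$$\int_{\widehat{G}}\langle\gamma,\chi\rangle\,d\Pi(\chi)=\int_{\widehat{\Gamma}}\langle\gamma,\omega\rangle\,d\Pi_{\Gamma}(\omega)=\pi(\gamma),$$
which is the asserted conclusion.
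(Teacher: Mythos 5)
The paper offers no proof of this statement: it is quoted verbatim (as the SNAG/spectral theorem) from Folland, Theorem 4.44, and Folland's own proof runs exactly along the route you take --- extend $\pi$ to a nondegenerate $\ast$-representation of the group algebra, identify $C^{\ast}(\Gamma)$ with $C(\widehat{\Gamma})$ via the Gelfand--Fourier transform, and apply the spectral theorem for commutative $C^{\ast}$-algebras. So your argument is correct and is essentially the standard proof of the cited result, legitimately specialised to discrete $\Gamma$ (which is the only case the paper uses, and in fact the only case in which the statement is true as written, since no strong continuity of $\pi$ is assumed). Your final step transporting $\Pi_{\Gamma}$ to $\widehat{G}$ along a Borel section of the restriction map is correct but unnecessary: the occurrence of $\widehat{G}$ in the statement is a typographical slip for $\widehat{\Gamma}$, as everywhere else in the paper (e.g.\ Theorem \ref{web} and Remark \ref{rap}) the projection valued measure lives on $\widehat{\Gamma}$.
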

 Combining the spectral theorem, Stone's theorem and the theory of spectral multiplicity, 
we have the following
decomposition theorem  for a  representation of a countable discrete locally compact abelian
 group
(see Weber's thesis \cite{We} for a proof)(see also \cite[Theorem on p.17]{HLN}):
\begin{thm}\label{web}
Let $\pi:\Gamma\rightarrow\mathcal{U}({\mathcal H}),$
be a unitary representation of a countable discrete locally compact abelian
 group $\Gamma$ on the Hilbert space 
${\mathcal H}$.
There exists a finite  measure $\mu$ on
Borel subsets of $\widehat{\Gamma}$ (we normalize it so that $\mu(\widehat{\Gamma})=1$)
and nested  measurable subsets
$$\dots\sigma_i\subset \dots\subset\sigma_2\subset\sigma_1\subset\widehat{\Gamma},$$
 there exists a unitary operator
\begin{equation}\label{ti}
T:{\mathcal H}\rightarrow \bigoplus_i{L^2(\sigma_i,\mu)}\hookrightarrow
L^2(\widehat{\Gamma},\ell^2(\Gamma),\mu),
\end{equation}
such that
\begin{itemize}
\item[1)]\begin{equation}\label{inter}
[T(\pi(\gamma)f)]_i(\chi)=(\gamma,\chi)[T(f)]_i(\chi),\end{equation}
for all $\gamma\in \Gamma,$
$f\in {\mathcal H},$
$\mu$-a.e. $\chi\in\widehat{\Gamma},$
where by $L^2(\sigma_i,\mu)$ we
mean $\mu$-measurable square summable (scalar) functions defined in $\widehat{\Gamma}$
 with support in $\sigma_i;$
\item[2)]
$T$ intertwines $\Pi$ with the canonical projection valued measure (given by multiplication by
characteristic function of Borel subsets),
i.e.\\  $T\circ {\Pi}(E)(f)=I_{E}\, T(f)$, for all $f\in {\mathcal H}$ and any Borel set
$E\subset\widehat{\Gamma}$.
\end{itemize}
The measure $\mu$ is unique up to equivalence of measures, and the $\sigma_i$'s are 
unique up to sets of $\mu$-measure $0.$
The  function $m:\widehat{\Gamma}\rightarrow \{0,1,\dots,+\infty\}$
 defined as 
$$
m(\chi)=\sharp\{\sigma_j,\;\chi\in\sigma_j\}=\sum_j{I_{\sigma_j}(\chi)},
$$
is called the multiplicity function.
\end{thm}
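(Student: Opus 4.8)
The plan is to derive Theorem \ref{web} from the spectral theorem (Theorem \ref{spe}) together with the Hahn--Hellinger theory of spectral multiplicity, exploiting throughout that ${\mathcal H}$ is separable and that $\widehat{\Gamma}$ is compact, metrizable and second countable. First I would apply Theorem \ref{spe} to obtain the projection valued measure $\Pi$ on $\widehat{\Gamma}$ with $\pi(\gamma)=\int_{\widehat{\Gamma}}(\gamma,\chi)\,d\Pi(\chi)$; since $\Gamma$ is discrete and countable, $\widehat{\Gamma}$ is a standard Borel space and $\Pi$ is a PVM on it, so the whole problem reduces to producing an \emph{ordered} direct-integral model for a single PVM. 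The intertwining identity \eqref{inter} and property 2) are two facets of one statement: that the model realizes $\Pi(E)$ as multiplication by $I_E$ and hence $\pi(\gamma)$ as multiplication by the character $(\gamma,\cdot)$.

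Next I would carry out the cyclic decomposition. For $v\in{\mathcal H}$ the spectral measure $\mu_v(E)=(\Pi(E)v,v)$ is a finite Borel measure on $\widehat{\Gamma}$, and the cyclic subspace $\overline{\text{span}}\{\pi(\gamma)v,\ \gamma\in\Gamma\}$ is unitarily equivalent to $L^2(\widehat{\Gamma},\mu_v)$ via $\pi(\gamma)v\mapsto(\gamma,\cdot)$, an equivalence sending $\Pi(E)$ to multiplication by $I_E$. Using separability I would select $v_1$ of maximal spectral type, set $\mu=\mu_{v_1}$ (normalized so that $\mu(\widehat{\Gamma})=1$), and then decompose the orthogonal complement inductively, obtaining cyclic generators $v_2,v_3,\dots$ whose spectral measures satisfy $\mu\gg\mu_{v_2}\gg\mu_{v_3}\gg\cdots$. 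By Radon--Nikodym each $\mu_{v_i}$ is carried by the set $\sigma_i=\{\,d\mu_{v_i}/d\mu>0\,\}$, determined up to $\mu$-null sets, and the chain of absolute continuities forces the nesting $\sigma_1\supseteq\sigma_2\supseteq\cdots$.

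With these pieces in hand I would assemble $T$ by summing the cyclic equivalences: on the $i$-th summand the map lands in $L^2(\sigma_i,\mu)$, so that $T:{\mathcal H}\to\bigoplus_i L^2(\sigma_i,\mu)$ is unitary and, fibrewise over $\chi$, takes values in a Hilbert space of dimension $m(\chi)=\sum_j I_{\sigma_j}(\chi)$, which embeds canonically into $\ell^2(\Gamma)$ because $\Gamma$ is countable; this yields the inclusion in \eqref{ti} and both properties 1) and 2) by construction. Finally I would establish uniqueness: that the measure class of $\mu$ and the sets $\sigma_i$ (equivalently the function $m$) depend only on the unitary equivalence class of $\pi$. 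The main obstacle is exactly this uniqueness statement, the core of the Hahn--Hellinger theorem: one must show that the maximal spectral type is a well-defined measure class and that the multiplicity function is an invariant, which is proved by comparing any two such decompositions on the simultaneously absolutely continuous and singular parts of their measures and matching cyclic pieces level by level. The construction part is routine once separability is invoked; it is this invariance that carries the real content.
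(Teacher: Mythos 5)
Your proposal is correct and follows essentially the same route as the paper's source for this result: the paper gives no proof of Theorem \ref{web}, deferring to Weber's thesis and Helson's \emph{The Spectral Theorem}, and those references proceed exactly as you do — spectral theorem to get the PVM, cyclic decomposition with a vector of maximal spectral type, nested supports $\sigma_i$ via Radon--Nikodym, assembly of $T$ into $\bigoplus_i L^2(\sigma_i,\mu)\hookrightarrow L^2(\widehat{\Gamma},\ell^2(\Gamma),\mu)$, and the Hahn--Hellinger invariance for uniqueness. You correctly identify the uniqueness of the measure class and of the multiplicity function as the part carrying the real content.
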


From \eqref{inter} we get the following identity for
$F\in L^2(\widehat{\Gamma},\ell^2(\Gamma),\mu),$  $f\in {\mathcal H},$ and  the inner
product in $\ell^2(\Gamma),$
\begin{equation}\label{ps}
(F(\chi),T(\pi(\gamma)f)(\chi))=\overline{(\gamma,\chi)}(F(\chi), T f(\chi)),
\end{equation}
for all  $\gamma\in \Gamma,$
$\mu$-a.e. $\chi\in\widehat{\Gamma}$.


We assume, since in general this is not the case, that $\mu$ is absolutely continuous
(which means, as is customery, absolutely continuous with respect to  Haar
measure $\lambda$ on $\widehat{\Gamma}$). Hence, by uniqueness of $\mu$ up to
equivalence of measures   and by uniqueness of the sets $\sigma_i$ up to sets of
$\mu$ measure $0$, we can assume that $\mu$ is the restriction of the Haar measure
$\lambda$ to the set $\sigma_1.$


A closed subspace $V\subset {\mathcal H}$ is said $\pi$-invariant if
$\pi(\gamma)V\subset V$, for all $\gamma\in \Gamma.$
We  use the following notation for a fixed $\psi\in {\mathcal H},$
$$Y=\{\delta^j\, \pi(\gamma)\,\psi, \;{j\in\Z},\, \gamma\in\Gamma\},$$
$$V_k=\overline{\text{span}}\{\delta^j \,\pi(\gamma)\,\psi,\; {j<k},\, \gamma\in\Gamma\}=\delta^k(V_0),\quad k\in\Z.$$
The indicator function of any set $A$ is denoted by $I_A.$ 
\\
All Hilbert spaces in this paper are separable.

\section{Extension of Bownik and Speegle result}\label{BS}
Results in this section extend some work by Bownik and Weber \cite{BoWe}, and Bownik and Speegle
\cite{BoSp} to abstract context. We include the proofs for sake of completeness.
%
\begin{defn}
The frame operator for a frame $(f_j)_{j\in J}$ in the Hilbert space ${\mathcal H}$ is
$$S:{\mathcal H}\rightarrow {\mathcal H}, \quad S(f)=\sum_{j\in J}{<f,f_j> f_j}.$$
It is a bounded, positive, invertible operator. The frame is a tight frame if and only if
$S=AI$, where $I$ is the identity operator. The frame is a Parseval frame iff $S=I$ i.e.
$$
\sum_{j\in J}{<f,f_j> f_j}=f, \quad\text{for all}\; f\in {\mathcal H}.
$$
\end{defn}
\begin{thm}\label{parse}
If $Y$ is a Parseval frame then for all $k\in\N$ the set
$$V_k=\overline{\textrm{span}}\,\{\delta^j\pi(\gamma)\,\psi, j<k, \gamma\in\Gamma\}$$
is $\pi$-invariant.
\end{thm}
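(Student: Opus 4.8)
The plan is to reduce the $\pi$-invariance of $V_k$ to a single operator identity that follows purely from the Parseval property, thereby bypassing the algebraic commutation relations, which at negative dilation levels are unavailable. Fix $k\in\N$ and $\gamma'\in\Gamma$, and compare the two Parseval frames $Y$ and $\pi(\gamma')Y$; since $\pi(\gamma')$ is unitary, $\pi(\gamma')Y$ is again a Parseval frame. The key observation is that these two frames coincide, level by level, on all dilation levels $j\geq k$: for $j\geq k\geq 1$ relation \eqref{Ba1} gives $\pi(\gamma')\delta^j\pi(\gamma)\psi=\delta^j\pi(\alpha^j(\gamma')+\gamma)\psi$, and since $\gamma\mapsto\gamma+\alpha^j(\gamma')$ is a bijection of $\Gamma$, the family $\{\pi(\gamma')\delta^j\pi(\gamma)\psi:\gamma\in\Gamma\}$ equals $\{\delta^j\pi(\gamma)\psi:\gamma\in\Gamma\}$ (the case $j=0$, relevant only when $k=0$, is immediate). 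Thus $Y$ and $\pi(\gamma')Y$ differ only at the levels $j<k$.

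First I would exploit this coincidence through the Parseval identity. Writing $\sum_{j,\gamma}|\langle f,\delta^j\pi(\gamma)\psi\rangle|^2=\|f\|^2=\sum_{j,\gamma}|\langle f,\pi(\gamma')\delta^j\pi(\gamma)\psi\rangle|^2$ and cancelling the equal contributions of the levels $j\geq k$ (all terms being nonnegative, so the level-wise splitting is legitimate), I obtain, for every $f\in{\mathcal H}$,
\[
\sum_{j<k,\gamma}|\langle f,\delta^j\pi(\gamma)\psi\rangle|^2=\sum_{j<k,\gamma}|\langle f,\pi(\gamma')\delta^j\pi(\gamma)\psi\rangle|^2=\sum_{j<k,\gamma}|\langle \pi(-\gamma')f,\delta^j\pi(\gamma)\psi\rangle|^2.
\]
Introducing the low-pass analysis operator $C_k f=(\langle f,\delta^j\pi(\gamma)\psi\rangle)_{j<k,\gamma}$, which is bounded since the left-hand side is $\leq\|f\|^2$, this identity reads $\|C_kf\|=\|C_k\pi(-\gamma')f\|$ for all $f$. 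Replacing $f$ by $\pi(\gamma')f$ gives $\|C_k\pi(\gamma')f\|=\|C_kf\|$, that is $\langle C_k^*C_k\pi(\gamma')f,\pi(\gamma')f\rangle=\langle C_k^*C_kf,f\rangle$. Since $C_k^*C_k$ is self-adjoint and $\pi(\gamma')$ is unitary, polarization of the vanishing quadratic form $\langle(\pi(-\gamma')C_k^*C_k\pi(\gamma')-C_k^*C_k)f,f\rangle=0$ yields $\pi(-\gamma')C_k^*C_k\pi(\gamma')=C_k^*C_k$; in other words the positive operator $Q_k:=C_k^*C_k$ commutes with $\pi(\gamma')$.

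To finish, I would identify $\ker Q_k$. We have $\ker Q_k=\ker C_k=\{f:\langle f,\delta^j\pi(\gamma)\psi\rangle=0\ \text{for all}\ j<k,\ \gamma\in\Gamma\}$, which is precisely the set of vectors orthogonal to every generator of $V_k$, so $\ker Q_k=V_k^{\perp}$. Because $Q_k$ commutes with the unitary $\pi(\gamma')$, its kernel $V_k^{\perp}$ is $\pi(\gamma')$-invariant; as $\gamma'\in\Gamma$ was arbitrary and $\Gamma$ is a group, $V_k^{\perp}$ is $\pi$-invariant, and hence so is its orthogonal complement $V_k$ (using once more that $\pi$ is unitary and that $\pi(-\gamma')V_k^{\perp}\subseteq V_k^{\perp}$). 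This establishes the claim for every $k\in\N$.

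The main obstacle is exactly the point that forces the Parseval hypothesis into play: at the negative levels $j<0$ one cannot rewrite $\pi(\gamma')\delta^j\pi(\gamma)\psi$ in the form $\delta^j\pi(\zeta)\psi$, because $\alpha$ has index $N>1$ and so is not surjective, and a direct generator-by-generator verification of invariance therefore breaks down for the deep levels. The device above sidesteps this entirely: instead of tracking where individual deep generators are sent, it uses the coincidence of the two Parseval frames at the non-negative levels to transfer all the information into a single commutation statement for the low-pass frame operator $Q_k$. The only routine matters to be checked are the unconditional convergence of the frame sums, which justifies the level-wise cancellation, and the polarization step, both standard for Parseval frames and self-adjoint operators.
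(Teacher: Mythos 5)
Your proof is correct and is essentially the paper's own argument: both split the frame at level $k$, use \eqref{Ba1} and the bijection $\gamma\mapsto\alpha^j(\gamma')\gamma$ to handle the levels $j\geq k$, and conclude by identifying the range (equivalently, the kernel of the adjoint side) of the low-pass operator with $V_k$ (resp.\ $V_k^{\perp}$); your $Q_k=C_k^{*}C_k$ is exactly the paper's operator $B_2$. The only cosmetic difference is that you obtain the commutation of $Q_k$ with $\pi(\gamma')$ by cancelling scalar Parseval sums and polarizing, whereas the paper proves $\pi(\eta)B_1=B_1\pi(\eta)$ by rearranging the vector-valued sum directly and then passes to $B_2=I-B_1$.
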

\begin{proof}
Fix $k\geq 0.$ Since $Y$ is a Parseval frame, for all $f\in {\mathcal H},$ we have
\begin{eqnarray*}
f&=& \sum_{j\geq k}\sum_{\gamma\in \Gamma}{<f,\delta^j\pi(\gamma)\psi>\delta^j\pi(\gamma)\psi }\\ \\
& &+\sum_{j<k}\sum_{\gamma\in \Gamma}{<f,\delta^j\pi(\gamma)\psi>\delta^j\pi(\gamma)\psi } \\ \\
&=& B_1f+B_2f,
\end{eqnarray*}
where, by the frame property, the linear operators $B_i$ are bounded.

Now, if $\eta\in\Gamma,$ since $k$ is positive, by \eqref{Ba1},
\begin{eqnarray*}
\pi(\eta)B_1f&=&
\sum_{j\geq k}\sum_{\gamma\in \Gamma}
{<f,\delta^j\,\pi(\gamma)\,\psi>\pi(\eta)\delta^j\pi(\gamma)\,\psi }\\ \\
&=& \sum_{j\geq k}\sum_{\gamma\in \Gamma}
{<f,\delta^j\pi(\gamma)\,\psi>\delta^j\pi(\alpha^j(\eta))\pi(\gamma)\,\psi }.
\end{eqnarray*}
If we set $\nu=\alpha^j(\eta)\gamma$ (on the other hand any
$\nu$ can be written obviously as $\alpha^j(\eta)[\alpha^j(\eta)]^{-1}\nu$),
and we use \eqref{Ba1}, by unitariness of $\pi$  the latter is equal to
\begin{eqnarray*}
& &\sum_{j\geq k}\sum_{\nu\in \Gamma}
{<f,\delta^j\pi([\alpha^j(\eta)]^{-1})\pi(\nu)\,\psi>\delta^j\pi(\nu)\,\psi}\\ \\
&=&\sum_{j\geq k}\sum_{\nu\in \Gamma}
{<f,\pi(\eta)^{\ast}\delta^j\pi(\nu)\,\psi>\delta^j\pi(\nu)\,\psi }\\ \\
&=&\sum_{j\geq k}\sum_{\nu\in \Gamma}
{<\pi(\eta)f,\delta^j\pi(\nu)\,\psi>\delta^j\pi(\nu)\,\psi }\\ \\
&=&B_1(\pi(\eta)f).
\end{eqnarray*}
It follows that $B_2({\mathcal H})$ is $\pi$-invariant since
$$\pi(\eta)B_2f=\pi(\eta)(f-B_1f)=\pi(\eta)f-B_1(\pi(\eta)f)=B_2(\pi(\eta)f),$$
and so $\overline{B_2({\mathcal H})}$ is $\pi$-invariant as well.

Next we show that $\overline{B_2({\mathcal H})}=V_k,$ from which we obtain that $V_k$ is $\pi$-invariant.
 Indeed obviously ${B_2({\mathcal H})}\subset V_k$ and so
we get $\overline{B_2({\mathcal H})}\subset V_k.$ Conversely, if $f\in B_2({\mathcal H})^{\perp},$ then
$$0=<f,B_2f>=
\sum_{j<k}\sum_{\gamma\in \Gamma}{|<f,\delta^j\pi(\gamma)\psi>|^2 },$$
so $f\in V_k^{\perp}$ and everything is proved.
\end{proof}
\begin{lem}\label{lemma2}
Assume $V\subset{\mathcal H}$ is a  $\pi$-invariant closed subspace, and let $P_V$ be the
orthogonal projection onto $V.$ Then, for any $\gamma\in\Gamma,$ we have
$$P_V\pi(\gamma)=\pi(\gamma)P_V.$$
\end{lem}
\begin{proof}
Since $V$ is $\pi$-invariant, then $V^{\perp}$ is $\pi$-invariant, too.
%
Decomposing any $u\in{\mathcal H}$ as $u=v+w,$ where $v\in V$ and $w\in V^{\perp},$
we have
$$\pi(\gamma)P_Vu=\pi(\gamma)v=P_V(\pi(\gamma)v+\pi(\gamma)w)
=P_V(\pi(\gamma)(v+w))=P_V\pi(\gamma)u.$$
\end{proof}
\begin{thm}\label{primali}
Assume $V_0=\overline{\textrm{span}}\,\{\delta^j\pi(\gamma)\psi, j<0, \gamma\in\Gamma\}$
is $\pi$-invariant,  $V_0\neq V_1=\delta(V_0).$
Assume that for any $0\neq f\in{\mathcal H}$ the collection
$\{\pi(\gamma)f,\,\gamma\in\Gamma\}$ is linearly independent.

Then the affine system $Y$ is linearly independent.
\end{thm}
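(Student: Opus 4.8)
The plan is to argue by contradiction, exploiting the filtration $V_k=\delta^k(V_0)$ together with a descent on the top dilation level. Suppose the affine system $Y$ were linearly dependent, so that there is a finite nontrivial relation $\sum_{j,\gamma}c_{j,\gamma}\,\delta^j\pi(\gamma)\psi=0$. Let $j_0$ be the largest index $j$ for which some coefficient $c_{j_0,\gamma}$ is nonzero, and peel off the top layer:
$$\sum_{\gamma}c_{j_0,\gamma}\,\delta^{j_0}\pi(\gamma)\psi=-\sum_{j<j_0}\sum_{\gamma}c_{j,\gamma}\,\delta^j\pi(\gamma)\psi.$$
The right-hand side is a finite combination of vectors $\delta^j\pi(\gamma)\psi$ with $j<j_0$, hence lies in $V_{j_0}$; therefore so does the left-hand side. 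Since $V_{j_0}=\delta^{j_0}(V_0)$ and $\delta$ is unitary, applying $\delta^{-j_0}$ shows that the vector $g:=\sum_{\gamma}c_{j_0,\gamma}\,\pi(\gamma)\psi$ belongs to $V_0$.

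Next I would project onto $V_0^{\perp}$. Let $P$ be the orthogonal projection onto $V_0$ and $P^{\perp}=I-P$. Because $V_0$ is $\pi$-invariant, Lemma \ref{lemma2} gives $P^{\perp}\pi(\gamma)=\pi(\gamma)P^{\perp}$ for every $\gamma$. Applying $P^{\perp}$ to $g$ and using $g\in V_0$ yields
$$0=P^{\perp}g=\sum_{\gamma}c_{j_0,\gamma}\,P^{\perp}\pi(\gamma)\psi=\sum_{\gamma}c_{j_0,\gamma}\,\pi(\gamma)\big(P^{\perp}\psi\big).$$
Thus the single vector $f:=P^{\perp}\psi$ satisfies the finite $\Gamma$-relation $\sum_{\gamma}c_{j_0,\gamma}\,\pi(\gamma)f=0$ among its own translates.

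The crux is to rule out the degenerate case $f=0$, i.e. to show $\psi\notin V_0$; this is exactly where the hypothesis $V_0\neq V_1$ enters. Indeed, if $\psi\in V_0$ then, $V_0$ being closed and $\pi$-invariant, every $\pi(\gamma)\psi$ lies in $V_0$; but $V_1$ is the closed span of $V_0$ together with the level-$0$ vectors $\{\pi(\gamma)\psi:\gamma\in\Gamma\}$, so $V_1\subseteq V_0$, contradicting $V_0\neq V_1$. Hence $f=P^{\perp}\psi\neq 0$. The standing hypothesis that $\{\pi(\gamma)f:\gamma\in\Gamma\}$ is linearly independent for every nonzero $f$ then forces $c_{j_0,\gamma}=0$ for all $\gamma$, contradicting the choice of $j_0$. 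Consequently no nontrivial relation can exist, and $Y$ is linearly independent. The only genuinely delicate point is the verification $\psi\notin V_0$; everything else is routine bookkeeping with the scaled spaces $V_k=\delta^k(V_0)$ and the commutation relation of Lemma \ref{lemma2}.
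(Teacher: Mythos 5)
Your proposal is correct and follows essentially the same route as the paper: reduce to the top dilation level, observe that the level-$j_0$ combination lies in $V_{j_0}$, project onto $V_0^{\perp}$ using the commutation from Lemma \ref{lemma2}, and rule out $\psi\in V_0$ via the hypothesis $V_0\neq V_1$. The only cosmetic difference is that the paper first normalizes the relation so that the top index is $j=0$ before projecting, whereas you apply $\delta^{-j_0}$ afterward; the substance is identical.
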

\begin{proof}
Suppose there exist a finite number of non-zero constants $c_{j,\gamma}\in\C$ such that
\begin{equation}\label{somma}
\sum_{j\in \mathbb{Z}}\sum_{\gamma\in\Gamma}{c_{j,\gamma}\,\delta^j\pi(\gamma)\psi}=0.
\end{equation}
By applying either $\delta$ or its inverse as many times as we need, we can suppose that the biggest $j$
in the sum \eqref{somma} such that $c_{j,\gamma}\neq 0$, for some $\gamma\in\Gamma,$ is $j=0.$

So \eqref{somma} leads to
$$\sum_{\gamma\in\Gamma}{c_{0,\gamma}\,\pi(\gamma)\psi}=
-\sum_{j<0}\sum_{\gamma\in\Gamma}{c_{j,\gamma}\,\delta^j\pi(\gamma)\psi}\in V_0.$$
If $P_{V_0}$ is the orthogonal projection onto $V_0,$ then, by Lemma \ref{lemma2}
\begin{eqnarray}
0&=&(I-P_{V_0})[\sum_{\gamma\in\Gamma}{c_{0,\gamma}\,\pi(\gamma)\psi}]\nonumber\\
&=&\sum_{\gamma\in\Gamma}{c_{0,\gamma}\,(I-P_{V_0})\pi(\gamma)\psi}\nonumber\\
&=&\sum_{\gamma\in\Gamma}{c_{0,\gamma}\,\pi(\gamma)(I-P_{V_0})\psi}.\label{pp}
\end{eqnarray}
Note that $\psi\notin V_0,$ otherwise, since $V_0$ is $\pi$-invariant, we get
$\pi(\gamma)\psi\in V_0$ for any $\gamma\in\Gamma,$  and  the contradiction
$$V_1=\overline{\textrm{span}}\,\{\delta^j\pi(\gamma)\psi, j<1, \gamma\in\Gamma\}=
\overline{\textrm{span}}\,\{\delta^j\pi(\gamma)\psi, j\leq 0, \gamma\in\Gamma\}=V_0.$$
Therefore $(I-P_{V_0})\psi\neq 0$, and  \eqref{pp} leads to a contradiction of
our hypothesis on linear independence.
\end{proof}

The above theorem obviously holds if we only assume
that for any $0\neq f\in V_0^{\perp}\subset{\mathcal H}$ the collection
$$\{\pi(\gamma)f, \gamma\in\Gamma\}$$
is linearly independent. A closer look to its proof shows
 that it
generalizes to more than one function, say $0\neq \psi_1,\dots,\psi_n\in
{\mathcal H},$
assuming that the set
$$\{\pi(\gamma)(I-P_{V_0})\psi_i, \gamma\in\Gamma, \,i=1,\dots,n\}$$
is linearly independent and no $\psi_i$ belongs to $V_0$.

This last remark is used in the following example, taken from \cite{BoSp}, to show
how the use of more general groups leads to results that cannot be reached just
using the group $\Z$.
\begin{ex}
Given $\varepsilon>0$, let us define  the function $\psi=\psi_0+\varepsilon \psi_1$
where
$$\hat{\psi_0}={\bf 1}_{[-1/4,-1/8]\cup[1/8,1/4]},\quad
\hat{\psi_1}={\bf 1}_{[-1/2,-1/4]\cup[1/4,3/4]}.$$
We note that the system $\{D_{2^j}T_k\psi\}$ is a frame  for sufficiently small $\varepsilon>0$, even if this does not matter here.

The space of negative dilates is
$$\{f\in\ld, \, \text{supp}\hat{f}\subset [-1/4,3/8],\, \hat{f}(\xi-1/2)=\hat{f}(\xi)\; \text{for a.e.}\; \xi\in[1/4,3/8]\}$$
which is $2\Z$-shift invariant but not  shift invariant.

Thus Bownik and Speegle's theorem does not apply, while
 a direct calculation shows that $\{D_{2^j}T_k\psi\}$ is linearly independent.

On the other hand, we note that
$$\{D_{2^j}T_k\psi, \;j,k\in\Z\}=\{D_{2^j}T_{{2k}}\phi, \;j,k\in\Z,
\;\phi=\psi, T_1\psi\},$$
the space of negative dilates being obviously the same $2\Z$-shift invariant space.
Furthermore both $\psi$ and $T_1\psi$ do not belong to $V_0.$

If we prove that the set
$\{T_{2k}(I-P_{V_0})\psi, T_{2k}(I-P_{V_0})T_1\psi,\, k\in\Z\}$ is
linearly independent, we can apply  the (generalization of the) above theorem with
$\Gamma=2\Z,$ $\pi(2k)=T_{2k}$  to conclude
that $\{D_{2^j}T_k\psi, j,k\in\Z\}$ is linearly independent.

Now an easy calculation shows that
$(I-P_{V_0})\psi$ and $(I-P_{V_0})T_1\psi$ have Fourier transform,
respectively equal to
$$
\varepsilon{\bf 1}_{[-1/2,-1/4]\cup[3/8,3/4]}
+(\frac{1-\varepsilon}{2})
({\bf 1}_{[-1/4,-1/8]}-{\bf 1}_{[1/4,3/8]})$$
and
$$
\varepsilon e^{-2\pi i\xi}{\bf 1}_{[-1/2,-1/4]\cup[3/8,3/4]}
+(\frac{1+\varepsilon}{2})e^{-2\pi i\xi}
{\bf 1}_{[-1/4,-1/8]\cup[1/4,3/8]}.
$$
Hence, since the intervals $[-1/4,-1/8]$ and $[1/4,3/8]$ have disjoint
intersection with $[-1/2,-1/4]\cup[3/8,3/4],$ the linear independence follows.
\end{ex}

\section{Invariant spaces and range functions}\label{inv}
The purpose of this section is to  provide a version of Helson's theorem,
\cite[Theorem 8]{H64},
adapted to $\pi$-invariant spaces. In the case of translations on $L^2(G)$, a proof can be found in
the work by Bownik \cite{Bo} for uniform lattices in $\R^n$,  Cabrelli and Paternostro \cite{CaPa}
for uniform lattices in LCA group, and Bownik and Ross \cite{BoRo} for co-compact,
but not necessarily discrete, subgroups.
We stress that, in the opposite,  our generalization applies to discrete, but
not necessarily co-compact, subgroups, and that we assume the measure $\mu$ to be
absolutely continuous. Moreover the subsequent Corollary \ref{coro1} will be of fundamental
importance in the proof of the main result in Section \ref{main}.

%
%
\begin{defn} Assume $\mu$ is a $\sigma$-finite measure on $\widehat{\Gamma}.$
A range function is any map
$$J:\widehat{\Gamma}\rightarrow \{\text{closed subspaces of}\; \ell^2(\Gamma)\}.$$
$J$ is said measurable if, denoted by $P(\chi)$ the orthogonal projection
onto $J(\chi)$, for all $a,b\in \ell^2(\Gamma),$ the map
$$\chi\in\widehat{\Gamma}\mapsto(P(\chi)a,b)\in\C, $$
is $\mu$-measurable.
\end{defn}
Range functions are identified if they are a.e. equal with respect to the measure
$\mu$ on $\widehat{\Gamma}.$
\begin{defn}\label{defmj}
Let $J$ be a range function, let $\mu$ be a $\sigma$-finite measure on $\widehat{\Gamma}.$
We define
\begin{equation}\label{mj}
M_J=\{ F\in L^2(\widehat{\Gamma},\ell^2(\Gamma),\mu),\; F(\chi)\in J(\chi),\; \mu\text{-a.e. }
\chi\in\widehat{\Gamma}\}.
\end{equation}
\end{defn}
\begin{rem}\label{rem1}\cite[p. 57]{H64}\cite[p.6, ex.2]{HLN}

It is useful to recall that, for every sequence
$F_n\in L^2(\widehat{\Gamma},\ell^2(\Gamma),\mu),$ $n\in\N,$ converging
to $F\in L^2(\widehat{\Gamma},\ell^2(\Gamma),\mu)$ in norm, there exists a subsequence
$F_{n_k}$, $k\in\N,$ converging   to $F(\chi),$ pointwise a.e..

It follows that $M_J$ is a closed subspace of $L^2(\widehat{\Gamma},\ell^2(\Gamma),\mu).$
\end{rem}

The next lemma, proved in \cite[p.58]{H64} for $\widehat{\Gamma}=\mathbb{T}$, extends, mutatis mutandis, to
the general setting.

\begin{lem}\label{H64p58}
Let $J$ be a measurable range function. Let $M_J$ be the space defined  by \eqref{mj}.
Let $$\mathscr{P}:L^2(\widehat{\Gamma},\ell^2(\Gamma),\mu)\rightarrow L^2(\widehat{\Gamma},\ell^2(\Gamma),\mu)$$
be the orthogonal projection onto $M_J$ and,
for any $\chi\in \widehat{\Gamma}$, let us denote by
$P(\chi):\ell^2(\Gamma)\rightarrow \ell^2(\Gamma)$ the
 orthogonal projection onto $J(\chi).$ Then, for any $F\in L^2(\widehat{\Gamma},\ell^2(\Gamma),\mu),$
we have
\begin{equation}\label{pf}
(\mathscr{P}F)(\chi)=P(\chi)F(\chi),\quad \mu-\text{a.e.}\; \chi\in \widehat{\Gamma}.
\end{equation}
\end{lem}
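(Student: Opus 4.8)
The plan is to introduce the fiberwise operator $Q$ defined by $(QF)(\chi)=P(\chi)F(\chi)$, show it is a well defined bounded operator on $L^2(\widehat{\Gamma},\ell^2(\Gamma),\mu)$, and then identify it with $\mathscr{P}$ by verifying the two defining properties of the orthogonal projection onto $M_J$: that $QF$ lies in $M_J$ and that $F-QF$ is orthogonal to $M_J$. Granting this, formula \eqref{pf} is exactly the statement $Q=\mathscr{P}$.

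The first point is that $Q$ maps into $L^2(\widehat{\Gamma},\ell^2(\Gamma),\mu)$. The pointwise contraction property $\|P(\chi)F(\chi)\|_{\ell^2(\Gamma)}\le\|F(\chi)\|_{\ell^2(\Gamma)}$ gives at once $\|QF\|_2\le\|F\|_2$, so the only real content is measurability of $\chi\mapsto P(\chi)F(\chi)$, and I expect this to be the main obstacle. Since $\Gamma$ is countable, $\ell^2(\Gamma)$ is separable, so by Pettis it suffices to establish weak measurability, i.e. that $\chi\mapsto(P(\chi)F(\chi),e_\eta)$ is $\mu$-measurable for each element $e_\eta$ of the canonical basis $\{e_\gamma\}_{\gamma\in\Gamma}$. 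Writing $F(\chi)=\sum_{\gamma}F_\gamma(\chi)e_\gamma$ with scalar $\mu$-measurable coordinates $F_\gamma$, and using that $P(\chi)$ is self-adjoint, I would expand $(P(\chi)F(\chi),e_\eta)=(F(\chi),P(\chi)e_\eta)=\sum_{\gamma}\overline{(P(\chi)e_\eta,e_\gamma)}\,F_\gamma(\chi)$. Here each coefficient $\chi\mapsto(P(\chi)e_\eta,e_\gamma)$ is $\mu$-measurable \emph{precisely} by the measurability hypothesis on the range function $J$, and $F_\gamma$ is measurable; hence each summand is measurable, and the pointwise convergent series, being a limit of measurable partial sums, is measurable as well. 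This yields weak, hence strong, measurability of $QF$.

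With $Q$ well defined and bounded, the identification $Q=\mathscr{P}$ is the classical fiberwise argument. That $QF\in M_J$ is immediate, since $P(\chi)F(\chi)\in J(\chi)$ for $\mu$-a.e.\ $\chi$. For the orthogonality I would take an arbitrary $H\in M_J$ and compute $(F-QF,H)=\int_{\widehat{\Gamma}}\big((I-P(\chi))F(\chi),H(\chi)\big)\,d\mu(\chi)$; since $H(\chi)\in J(\chi)$ while $(I-P(\chi))F(\chi)\in J(\chi)^\perp$ for $\mu$-a.e.\ $\chi$, the integrand vanishes a.e., so $F-QF\perp M_J$.

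Finally, because $M_J$ is a closed subspace of $L^2(\widehat{\Gamma},\ell^2(\Gamma),\mu)$ by Remark \ref{rem1}, the orthogonal decomposition $F=QF+(F-QF)$ with $QF\in M_J$ and $F-QF\in M_J^\perp$ characterizes $QF$ as $\mathscr{P}F$. Thus $(\mathscr{P}F)(\chi)=P(\chi)F(\chi)$ for $\mu$-a.e.\ $\chi$, which is \eqref{pf}. The only nonroutine ingredient is the weak-measurability expansion above, where the measurability of $J$ is exactly what is needed; everything else is the standard projection computation carried out fiber by fiber.
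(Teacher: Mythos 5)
Your proof is correct and complete. The paper itself does not prove this lemma --- it simply cites Helson's \emph{Lectures on Invariant Subspaces}, p.~58, and asserts that the argument carries over \emph{mutatis mutandis} --- and your argument (fiberwise operator $Q$, weak measurability via the defining property of a measurable range function plus Pettis and separability of $\ell^2(\Gamma)$, then the standard verification that $QF\in M_J$ and $F-QF\perp M_J$) is exactly the classical proof being invoked.
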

Consider the unitary operator  $T$ given in Theorem \ref{web},\eqref{ti}.
%
%
%

The next theorem, which generalizes Helson's theorem, says that the same $T$ maps
unitarily  any $\pi$ invariant subspace $V$ onto a certain $M_{J_V}$.

We need a preliminary lemma. In the terminology of \cite{BoRo}, it states that
 ${\Gamma}$ is a determining set for
$L^1(\widehat{\Gamma}, \mu).$
\begin{lem}\label{comp}
Let $g:\widehat{\Gamma}\rightarrow \C$ be in $L^1(\widehat{\Gamma}, \mu),$
 such that, for all $\gamma\in \Gamma,$
$$
\int_{\widehat{\Gamma}}{(\gamma,\chi)\, g(\chi)\; d\mu(\chi)}=0,
$$
where $\mu$ is a finite measure on Borel sets of $\widehat{\Gamma}$ which is absolutely
continuous.
Then for $\mu$-almost all $\chi\in \widehat{\Gamma},$ we have $g(\chi)=0.$
\end{lem}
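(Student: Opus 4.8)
The plan is to reduce the statement to the classical uniqueness theorem for the Fourier transform on the compact abelian group $\widehat{\Gamma}$, using the absolute continuity of $\mu$ to pass to the Haar measure $\lambda$. First I would invoke the Radon--Nikodym theorem to write $d\mu = h\,d\lambda$ for some $h\in L^1(\widehat{\Gamma},\lambda)$ with $h\ge 0$. Setting $f=g\,h$, the hypothesis $\int_{\widehat{\Gamma}}|g|\,d\mu<+\infty$ becomes $\int_{\widehat{\Gamma}}|f|\,d\lambda<+\infty$, so $f\in L^1(\widehat{\Gamma},\lambda)$, and the assumption rewrites as
$$\int_{\widehat{\Gamma}}(\gamma,\chi)\,f(\chi)\,d\lambda(\chi)=0,\qquad\text{for all }\gamma\in\Gamma,$$
i.e. every Fourier coefficient of $f\in L^1(\widehat{\Gamma},\lambda)$ vanishes.

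The core of the argument is that these coefficients determine $f$. Since $\Gamma$ is discrete, $\widehat{\Gamma}$ is compact, and by Pontryagin duality the family $\{(\gamma,\cdot):\gamma\in\Gamma\}$ is exactly the set of all continuous characters of $\widehat{\Gamma}$. This family is closed under pointwise multiplication and under complex conjugation (as $\overline{(\gamma,\cdot)}=(-\gamma,\cdot)$ with $-\gamma\in\Gamma$), contains the constant $1=(0,\cdot)$, and separates the points of $\widehat{\Gamma}$; hence by the complex Stone--Weierstrass theorem the trigonometric polynomials are dense in $C(\widehat{\Gamma})$. Since the hypothesis also holds with $\gamma$ replaced by $-\gamma$, I get $\int_{\widehat{\Gamma}}\overline{p}\,f\,d\lambda=0$ for every trigonometric polynomial $p$, and the bound $\bigl|\int_{\widehat{\Gamma}}(u-p)\,f\,d\lambda\bigr|\le\|u-p\|_\infty\,\|f\|_{L^1(\lambda)}$ upgrades this to $\int_{\widehat{\Gamma}}u\,f\,d\lambda=0$ for all $u\in C(\widehat{\Gamma})$. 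Thus the finite complex measure $f\,d\lambda$ annihilates $C(\widehat{\Gamma})$, so by the uniqueness part of the Riesz representation theorem it is the zero measure; equivalently $f=g\,h=0$ for $\lambda$-a.e. $\chi$. (Alternatively one may simply cite injectivity of the Fourier transform on $L^1$ of a compact abelian group.)

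Finally I would transfer the conclusion back to $\mu$: on the set $\{g\neq 0\}$ the identity $g\,h=0$ forces $h=0$ $\lambda$-a.e., so $\mu(\{g\neq 0\})=\int_{\{g\neq 0\}}h\,d\lambda=0$, that is $g=0$ $\mu$-a.e. The steps using Radon--Nikodym, Stone--Weierstrass and Riesz are routine; the one point that really matters is that the vanishing of the coefficients must be tested against all of $C(\widehat{\Gamma})$, and this is exactly where absolute continuity is indispensable, since it is what permits the reduction to $\lambda$ and thereby the use of Fourier uniqueness. In the terminology of \cite{BoRo} recalled before the statement, this is precisely the assertion that $\Gamma$ is a determining set for $L^1(\widehat{\Gamma},\mu)$.
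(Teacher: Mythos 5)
Your proof is correct and follows essentially the same route as the paper: Radon--Nikodym to pass from $\mu$ to $h\,d\lambda$, Pontryagin duality plus Fourier uniqueness on the compact group $\widehat{\Gamma}$ to conclude $gh=0$ $\lambda$-a.e., and then the same measure-theoretic transfer back to $\mu$ (your computation $\mu(\{g\neq 0\})=\int_{\{g\neq 0\}}h\,d\lambda=0$ is just the paper's $A\cup B$ decomposition rephrased). The only difference is that you prove the Fourier uniqueness step via Stone--Weierstrass and Riesz representation rather than citing it, which is a harmless elaboration.
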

\begin{proof}
Let $h\in
L^1(\widehat{\Gamma},\lambda),$ be given by the Radon-Nikod\'{y}m theorem, such that $\mu(E)=\int_E h(\chi) \,d\lambda(\chi),$ and
$$\int_{\widehat{\Gamma}}{(\gamma,\chi)\, g(\chi) h(\chi)\; d\lambda(\chi)}=0,
\quad \text{for all}\, \gamma\in\Gamma.$$
By Pontryagin duality theorem and Fourier uniqueness theorem, we get,
for $\lambda$ almost all $\chi\in \widehat{\Gamma},$ $g(\chi) h(\chi)=0.$

Let us denote by $A\subset\widehat{\Gamma}$ the set where $g(\chi) h(\chi)\neq0,$
and by $B\subset\widehat{\Gamma}$ the set where $h(\chi)=0$. Then $\lambda(A)=0$
and so $\mu(A)=0$ by absolute continuity.  Also
$\mu(B)=\int_B h(\chi)\,d\lambda(\chi)=0.$ But
$$g(\chi)\neq 0\Longrightarrow \chi\in A\cup B,$$
so the result follows.
\end{proof}

\begin{thm}\label{hel}
Let $V\subset {\mathcal H}$ be a $\pi$-invariant closed subspace, where
$\pi:\Gamma\rightarrow\mathcal{U}({\mathcal H})$ is a unitary representation. Then
\begin{equation}\label{setV}
V=\{f\in{\mathcal H}, \; Tf(\chi)\in J_V(\chi),\; \mu\text{-a.e.}\, \chi\in\widehat{\Gamma}\},
\end{equation}
where $T$ is the unitary operator in \eqref{ti}  and $J_V$ is a $\mu$-measurable range function.
The correspondence between $V$ and $J_V$ is one-to-one. Moreover
\begin{equation}\label{vspan}
V=\overline{\text{span}}\{\pi(\gamma)\varphi,\; \gamma\in \Gamma, \varphi\in \mathscr{A}\},
\end{equation}
for an at most countable set $\mathscr{A}\subset{\mathcal H},$  and for any such $\mathscr{A}$ verifying
\eqref{vspan} we have,
$$J_V(\chi)=\overline{\text{span}}\{T \varphi(\chi),\;  \varphi\in \mathscr{A}\},\quad
\mu\text{-a.e.}\,\chi\in\widehat{\Gamma}.$$
\end{thm}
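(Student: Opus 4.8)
The plan is to transfer everything to the spectral realization provided by $T$ and reduce to a Helson-type statement for subspaces of $L^2(\widehat{\Gamma},\ell^2(\Gamma),\mu)$ that are invariant under multiplication by characters. First I would set $\widehat{V}=T(V)$, a closed subspace of $L^2(\widehat{\Gamma},\ell^2(\Gamma),\mu)$ since $T$ is unitary. The intertwining relation \eqref{inter} reads $T(\pi(\gamma)f)(\chi)=(\gamma,\chi)\,Tf(\chi)$, so, as $V$ is $\pi$-invariant and $|(\gamma,\chi)|=1$, the space $\widehat{V}$ is invariant under multiplication by each character $(\gamma,\cdot)$, $\gamma\in\Gamma$. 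Using separability of $\mathcal H$ I would record \eqref{vspan} at the outset: taking a countable dense subset of $V$ and closing it under the $\pi$-action yields a countable $\mathscr A=\{\varphi_n\}\subset V$ with $V=\overline{\text{span}}\{\pi(\gamma)\varphi_n\}$, the inclusion $\supseteq$ holding because $V$ is $\pi$-invariant and closed.

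I would then \emph{define} the candidate range function by $J_V(\chi)=\overline{\text{span}}\{T\varphi_n(\chi):n\in\N\}$. Measurability of $J_V$ is the first technical point: applying Gram--Schmidt to the measurable vector fields $\chi\mapsto T\varphi_n(\chi)$ produces measurable orthonormal fields spanning $J_V(\chi)$, so the entries $\chi\mapsto(P(\chi)a,b)$ of the associated projection are measurable as limits of sums of products of measurable functions.

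The heart of the argument is to prove $\widehat{V}=M_{J_V}$. The inclusion $\widehat{V}\subset M_{J_V}$ is immediate, since $\widehat{V}=\overline{\text{span}}\{(\gamma,\cdot)\,T\varphi_n\}$, each generator takes values in $J_V(\chi)$, and $M_{J_V}$ is closed by Remark \ref{rem1}. For the reverse inclusion I would show $M_{J_V}\cap\widehat{V}^{\perp}=\{0\}$: if $G\in M_{J_V}$ is orthogonal to $\widehat{V}$, then orthogonality to every $T(\pi(\gamma)\varphi_n)$ together with \eqref{ps} gives
$$\int_{\widehat{\Gamma}}\overline{(\gamma,\chi)}\,(G(\chi),T\varphi_n(\chi))\,d\mu(\chi)=0,\qquad\text{for all }\gamma\in\Gamma,$$
and since $g_n(\chi)=(G(\chi),T\varphi_n(\chi))$ lies in $L^1(\widehat{\Gamma},\mu)$ by Cauchy--Schwarz, Lemma \ref{comp} (the determining-set property, where absolute continuity of $\mu$ enters) forces $g_n=0$ $\mu$-a.e. for each $n$. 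Intersecting over the countably many $n$ gives $G(\chi)\perp J_V(\chi)$ a.e.; but $G(\chi)\in J_V(\chi)$, so $G=0$. Hence $\widehat{V}=M_{J_V}$, and pulling back by $T^{-1}$ yields exactly \eqref{setV}.

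Finally I would settle the one-to-one correspondence and the span formula. Injectivity holds because $J_V$ determines $M_{J_V}=\widehat{V}$ hence $V$, while conversely Lemma \ref{H64p58} identifies the pointwise projection onto $J_V(\chi)$ with the a.e. action of the orthogonal projection onto $\widehat{V}=M_{J_V}$, so $J_V$ is determined a.e. by $V$ alone. For an arbitrary $\mathscr A$ satisfying \eqref{vspan}, set $J'(\chi)=\overline{\text{span}}\{T\varphi(\chi):\varphi\in\mathscr A\}$; repeating verbatim the two inclusions above (again invoking Lemma \ref{comp}) gives $M_{J'}=\overline{\text{span}}\{(\gamma,\cdot)T\varphi\}=T(V)=\widehat{V}=M_{J_V}$, and uniqueness then forces $J'=J_V$ a.e. I expect the main obstacle to be the reverse inclusion $M_{J_V}\subset\widehat{V}$, which is precisely where absolute continuity of $\mu$ and the determining-set Lemma \ref{comp} are indispensable; the measurability of $J_V$ is routine but must be handled with some care.
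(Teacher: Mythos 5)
Your proposal is correct and follows essentially the same route as the paper: both hinge on the determining-set Lemma \ref{comp} to prove $M_{J_V}\subset T(V)$ by killing the orthogonal complement, and on Lemma \ref{H64p58} for the uniqueness of the range function. Your only departures are minor simplifications --- you obtain \eqref{vspan} directly from a countable dense subset of $V$ rather than via the paper's projections of the fields $F_{\gamma,e}(\chi)=(\gamma,\chi)\,e$, and you establish measurability of $J_V$ by a Gram--Schmidt argument rather than from Lemma \ref{H64p58} --- and both shortcuts are sound.
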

\begin{proof}
 We omit the incessant reference to the measure $\mu$, hence
 it is assumed in this proof that a.e. means $\mu$-a.e..

In order to prove \eqref{setV}, we need to show that
$$T(V)=\{ F\in L^2(\widehat{\Gamma},\ell^2(\Gamma),\mu),\; F=Tf,\, f\in V\}=M_{J_V},$$
for a suitable measurable range function $J_V$.
Indeed, if the latter is true,
$f\in V$ implies $T f\in T(V)=M_{J_V}$, which means, by definition, that
$ T f(\chi)\in J_V(\chi),$ a.e..
Conversely, if $T f(\chi)\in J_V(\chi)$, a.e., then
$ T f\in M_{J_V}=T(V)$ and so $ T f=T g$
for some $g\in V$, yielding $f=g,$
since $T$ is one to one.

Once we prove $T(V)=M_{J_V}$, the uniqueness of $J_V$ comes from Lemma \ref{H64p58}.
Indeed, assume $T(V)=M_{J_V}=M_K$ for two measurable range functions. Let $\mathscr{P}$
be the orthogonal projection onto $T(V)$ and $P(\chi)$, $Q(\chi)$ be the orthogonal
projections onto $J_V(\chi)$ and $K(\chi)$ respectively. Then Lemma \ref{H64p58}
implies that for a.e. $\chi\in \widehat{\Gamma},$ and for all $F\in
 L^2(\widehat{\Gamma},\ell^2(\Gamma),\mu)$
$$P(\chi)F(\chi)=(\mathscr{P} F)(\chi)= Q(\chi)F(\chi).$$
In particular, for any $a\in \ell^2(\Gamma),$
$$P(\chi)a=P(\chi)P(\chi)a= Q(\chi)P(\chi)a,\quad\text{and}\quad
Q(\chi)a= P(\chi)Q(\chi)a,$$
hence the range of $P(\chi)$ equals the range of $Q(\chi)$ that means a.e.
$J_V(\chi)=K(\chi)$, i.e. $J_V=K$.

Now we prove \eqref{vspan}.

Let $\mathcal{E}$ be an orthonormal basis for $\ell^2(\Gamma).$
Note that $\mathcal{E}$ is countable since $\ell^2(\Gamma)$  is separable.

Let us consider the
following  elements in $L^2(\widehat{\Gamma},\ell^2(\Gamma),\mu)$,
$$F_{\gamma,e}(\chi)=(\gamma,\chi) \,e\in \ell^2(\Gamma),\quad
\gamma\in\Gamma, e\in\mathcal{E}.$$
We prove that
$\overline{\text{span}}\{F_{\gamma,e}, \gamma\in\Gamma, e\in\mathcal{E}\}=
L^2(\widehat{\Gamma},\ell^2(\Gamma),\mu)$.

Indeed,
if $F\in L^2(\widehat{\Gamma},\ell^2(\Gamma),\mu)$ is  such that
$0=(F_{\gamma,e},F)$ for  all $\gamma\in\Gamma, e\in\mathcal{E},$ then
$$
0=(F_{\gamma,e},F)=\int_{\widehat{\Gamma}}{(F_{\gamma,e}(\chi),F(\chi))\, d\mu(\chi)}
=\int_{\widehat{\Gamma}}{(\gamma,\chi) (e, F(\chi))\, d\mu(\chi)}.
$$
It follows that  the $L^1$ function ($e$ fixed)
$$\chi\in \widehat{\Gamma}\mapsto (e, F(\chi))\in\C,$$
verifies the hypotheses of Lemma \ref{comp}, hence
$(e, F(\chi))=0$ a.e. $\chi\in \widehat{\Gamma},$ for all $e\in\mathcal{E}.$ Since
$\mathcal{E}$ is an orthonormal basis of $\ell^2(\Gamma)$ we have $F(\chi)=0$ a.e.
$\chi\in \widehat{\Gamma},$
and so $F\equiv 0,$ as desired.

Let ${\cal{P}}(F_{\gamma,e})$ be the projection onto $T(V)$, then
${\cal{P}}(F_{\gamma,e})=T\varphi_{\gamma,e},$ for some $\varphi_{\gamma,e}\in V.$

By above, the set
$\{{\cal{P}}(F_{\gamma,e})=T\varphi_{\gamma,e},\, \gamma\in\Gamma,e\in\mathcal{E}\}$
 spans the range of ${\cal{P}}$, i.e.
$T(V).$

We claim that
$$V=\overline{\text{span}}\{\pi(\eta)\varphi_{\gamma,e},\;
\eta,\gamma\in \Gamma, e\in\mathcal{E}\}.$$
Indeed, since $V$ is $\pi$-invariant, it is obvious that
$\pi(\eta)\varphi_{\gamma,e}\in V,$ so that
$$\overline{\text{span}}\{\pi(\eta)\varphi_{\gamma,e},\;
\eta,\gamma\in \Gamma, e\in\mathcal{E}\}\subset V.$$
On the other hand, if  $f\in V$ such that $0=(f,\pi(\eta)\varphi_{\gamma,e})$,
for all $\eta,\gamma\in \Gamma, e\in\mathcal{E},$ since $T$ is a unitary map,
 by \eqref{ps}
\begin{eqnarray*}
0&=&(Tf,T(\pi(\eta)\varphi_{\gamma,e}))=
\int_{\widehat{\Gamma}}{(Tf(\chi),T(\pi(\eta)\varphi_{\gamma,e})(\chi))\, d\mu(\chi)}\\
&=&\int_{\widehat{\Gamma}}{\overline{(\eta,\chi)}(Tf(\chi),{\cal{P}}(F_{\gamma,e})(\chi))\, d\mu(\chi)}.
\end{eqnarray*}
Again, by Lemma \ref{comp} (${\gamma,e}$ fixed),
 $(Tf(\chi),{\cal{P}}(F_{\gamma,e})(\chi))=0$ a.e. $\chi\in \widehat{\Gamma},$ for all
$\gamma\in \Gamma, e\in\mathcal{E}$. Hence
$$(Tf,{\cal{P}}(F_{\gamma,e}))=
\int_{\widehat{\Gamma}}{(Tf(\chi),{\cal{P}}(F_{\gamma,e})(\chi))\, d\mu(\chi)}=0.$$
It follows, since
$\{{\cal{P}}(F_{\gamma,e})=T\varphi_{\gamma,e},\, \gamma\in \Gamma, e\in\mathcal{E}\}$ spans
${\cal{P}}(L^2(\widehat{\Gamma},\ell^2(\Gamma),\mu)=T(V),$ that $f\equiv 0$,
and so the claim is proved. Hence \eqref{vspan}
is proved with $\mathscr{A}$ being the (countable) collection of all $\varphi_{\gamma,e}.$
\vspace{.3cm}

Next let $\mathscr{A}$ be an at most countable set verifying \eqref{vspan}. Let us define the range
function $J_V$ as
$$J_V(\chi)=\overline{\text{span}}\{T \varphi(\chi),\;  \varphi\in \mathscr{A}\}\subset \ell^2(\Gamma)
,\quad
\text{a.e.}\,\chi\in\widehat{\Gamma}.$$
We show that $T(V)=M_{J_V}.$

If $F\in T(V)$, let $f\in V$ such that $Tf=F.$ Taken a sequence
$f_n\in {\text{span}}\{\pi(\gamma)\varphi,\; \gamma\in \Gamma, \varphi\in\mathscr{A}\}$ such that
$f_n\rightarrow f$ in norm, it follows that $Tf_n\rightarrow Tf=F.$

Now, for any $\gamma\in \Gamma,$ $\varphi\in \mathscr{A},$ and a.e. $\chi\in\widehat{\Gamma},$
\eqref{inter} implies
$$T(\pi(\gamma)\varphi)(\chi)=(\gamma,\chi) T\varphi(\chi)\in J_V(\chi),$$
since $(\gamma,\chi)\in\C.$ So a.e. also $Tf_n(\chi)\in J_V(\chi).$

As we pointed in Remark \ref{rem1}, there exists a subsequence such that
$Tf_{n_k}\rightarrow F$ a.e.. Since $J_V(\chi)$ is closed, this implies that
$F(\chi)\in J_V(\chi),$ a.e. and so $F\in M_{J_V}$ as required.

Conversely, assume that $T(V)\subsetneqq M_{J_V}.$ Then, there exists a non zero $F\in M_{J_V},$
such that $F\in T(V)^{\perp}.$ This yields, for all $\gamma\in \Gamma,$ and $\varphi\in\mathscr{A},$
\begin{eqnarray*}
0&=&\int_{\widehat{\Gamma}}{(T(\pi(\gamma)\varphi)(\chi),F(\chi))\, d\mu(\chi)}\\
&=&\int_{\widehat{\Gamma}}{(\gamma,\chi)(T\varphi(\chi),F(\chi))\, d\mu(\chi)},
\end{eqnarray*}
and the same reasoning above yields  $(T\varphi(\chi),F(\chi))=0$
for all $\varphi\in\mathscr{A},$ and a.e. $\chi\in\widehat{\Gamma}.$
In particular, since $F(\chi)\in J_V(\chi), $  we get
$(F(\chi),F(\chi))=0$ a.e., which implies the contradiction $\|F\|^2=0.$

Finally, it remains to show that $J_V$ is measurable.
Let $\mathscr{P}$ be the orthogonal projection onto $T(V)=M_{J_V},$ and $P(\chi)$
the orthogonal projection onto $J_V(\chi).$
By Lemma \ref{H64p58}, if we take the constant function $F(\chi)=a\in \ell^2(\Gamma),$
for a fixed $a$, by \eqref{pf} and every $b\in \ell^2(\Gamma),$
$$(P(\chi)a,b)=(P(\chi)F(\chi),b)=((\mathscr{P} F)(\chi),b).$$
The function $\chi\mapsto ((\mathscr{P} F)(\chi),b)$ is measurable since
$\mathscr{P} F$ is, so for all $a,b\in \ell^2(\Gamma)$ the function
$\chi\mapsto (P(\chi)a,b)$ is measurable and everything is proved.
\end{proof}

\begin{coro}\label{coro1}
Let $V\subset{\mathcal H}$ be a closed $\pi$-invariant subspace, $\varphi\in V,$ and
$f\in{\mathcal H}.$ If $T$ denotes the unitary operator in \eqref{ti}, suppose that for $\mu$-almost all
$\chi\in \widehat{\Gamma}$ there exists a constant
$c(\chi)\in\C$ such that, for all $i$,
\begin{equation}\label{ts}
[T f]_i(\chi)=c(\chi) [T \varphi]_i(\chi).
\end{equation}
 Then $f\in V.$
\end{coro}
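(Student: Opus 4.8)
The plan is to reduce the statement entirely to the characterization of $\pi$-invariant subspaces furnished by Theorem \ref{hel}. First I would apply that theorem to the closed $\pi$-invariant subspace $V$, obtaining the $\mu$-measurable range function $J_V$ for which \eqref{setV} gives
$$V=\{g\in{\mathcal H},\; Tg(\chi)\in J_V(\chi),\; \mu\text{-a.e.}\,\chi\in\widehat{\Gamma}\}.$$
Since $\varphi\in V$, this characterization immediately yields $T\varphi(\chi)\in J_V(\chi)$ for $\mu$-almost every $\chi\in\widehat{\Gamma}$.

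Next I would reinterpret the coordinate-wise hypothesis \eqref{ts}. The assertion that $[Tf]_i(\chi)=c(\chi)\,[T\varphi]_i(\chi)$ holds for every index $i$ is precisely the vector identity $Tf(\chi)=c(\chi)\,T\varphi(\chi)$ in $\ell^2(\Gamma)$, valid for $\mu$-a.e. $\chi$. The essential (and really only) observation is then that $J_V(\chi)$ is a \emph{closed subspace} of $\ell^2(\Gamma)$, hence invariant under multiplication by the scalar $c(\chi)\in\C$. Combining this with $T\varphi(\chi)\in J_V(\chi)$ gives
$$Tf(\chi)=c(\chi)\,T\varphi(\chi)\in J_V(\chi),\quad \mu\text{-a.e.}\,\chi\in\widehat{\Gamma}.$$
Invoking \eqref{setV} in the reverse direction, this membership of $Tf(\chi)$ in $J_V(\chi)$ a.e. is exactly what characterizes elements of $V$, so $f\in V$, as desired.

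There is no genuine obstacle here: all the substance is packaged into Theorem \ref{hel}, and the remaining argument only uses that a subspace is closed under scalar multiplication. The one point deserving (minor) care is that no measurability of the map $\chi\mapsto c(\chi)$ is assumed nor needed: since $f\in{\mathcal H}$, the function $Tf$ is already a bona fide element of $L^2(\widehat{\Gamma},\ell^2(\Gamma),\mu)$, so the pointwise conclusion $Tf(\chi)\in J_V(\chi)$ suffices to apply \eqref{setV}. The $\chi$ for which $T\varphi(\chi)=0$ (whence also $Tf(\chi)=0$) are handled trivially, as $0\in J_V(\chi)$ always.
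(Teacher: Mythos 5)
Your proof is correct and follows essentially the same route as the paper: both arguments reduce the claim to the range-function characterization of Theorem \ref{hel}. The only (immaterial) difference is that the paper first passes to the cyclic subspace $S=\overline{\text{span}}\{\pi(\gamma)\varphi,\ \gamma\in\Gamma\}\subset V$, for which $J_S(\chi)=\{\lambda\, T\varphi(\chi),\ \lambda\in\C\}$, and concludes $f\in S\subset V$, whereas you apply \eqref{setV} to $V$ directly, using only that $J_V(\chi)$ is a subspace containing $T\varphi(\chi)$.
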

\begin{proof}Since $V$ is $\pi$-invariant, for all $\gamma\in\Gamma$
we have $\pi(\gamma)\varphi\in V,$ and
$$S:=\overline{\text{span}}\{\pi(\gamma)\varphi,\; \gamma\in\Gamma\}\subset V.$$
Hence it suffices to prove that $f\in S.$

$S$ is $\pi$-invariant so, by Theorem \ref{hel}, we can write it in terms of its range
function
\begin{equation}\label{esse}
S=\{g\in\ldg, \; T g(\chi)\in J_S(\chi), \; \mu{\text-a.e.}\; \chi\in \widehat{\Gamma}\},
\end{equation}
where, since $S$ is generated by $\varphi$,
$$J_S(\chi)=\overline{\text{span}}\{T\varphi(\chi)\}=\{\lambda\, T\varphi(\chi),\;
\lambda\in\C\}.$$
\\
But \eqref{ts} implies that, for $\mu$-a.e. $\chi$, $T f(\chi)\in J_S(\chi)$ and so $f\in S$
by \eqref{esse}.
\end{proof}
\begin{defn}
Let $V\subset{\mathcal H}$ be a $\pi$-invariant closed space and let $J_V$ be a range
function associated with $V$ as in \eqref{setV} of Theorem \ref{hel}. If
$\mathscr{A}$ is a countable set verifying \eqref{vspan}, we define, for $\mu$-almost all
$\chi\in\widehat{\Gamma},$
$$
\dim_V(\chi)=\dim J_V(\chi)=\dim\overline{\text{span}}\{T\psi(\chi),\,
\psi\in\mathscr{A}\},
$$
where the latter means dimension as a vector subspace in $\ell^2(\Gamma).$
 If $\mathscr{A}$ is finite, we say that $V$ is finitely generated.
 If $V={\mathcal H}$ we simply write $\dim J(\chi).$

An elementary argument about  vector spaces is at the basis of the following proposition.
\end{defn}
\begin{prop}\label{dim}
Let $V\subset{\mathcal H}$ be a $\pi$-invariant closed space and let $J_V$ be a range
function associated with $V$ as in \eqref{setV} of Theorem \ref{hel}.
Let $\mathscr{A}$ be a countable set verifying \eqref{vspan}. If
$V$ is finitely generated, then, for $\mu$-almost all
$\chi\in\widehat{\Gamma},$ $\dim J_V(\chi)\leq\sharp\mathscr{A}<+\infty.$
\end{prop}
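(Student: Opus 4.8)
The plan is to reduce the statement to the elementary fact that the linear span of finitely many vectors in a Hilbert space is a finite-dimensional, hence automatically closed, subspace whose dimension cannot exceed the number of generators.

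First I would use that $V$ being finitely generated means the set $\mathscr{A}$ can be taken finite; write $\mathscr{A}=\{\varphi_1,\dots,\varphi_N\}$ with $N=\sharp\mathscr{A}<+\infty$. Since $\mathscr{A}$ verifies \eqref{vspan}, the last assertion of Theorem \ref{hel} applies to this particular $\mathscr{A}$ and gives, for $\mu$-almost every $\chi\in\widehat{\Gamma}$,
$$J_V(\chi)=\overline{\text{span}}\{T\varphi_i(\chi),\; i=1,\dots,N\}\subset\ell^2(\Gamma),$$
which matches the definition of $\dim_V(\chi)=\dim J_V(\chi)$ recorded just before the proposition.

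Next I would observe that the algebraic span $\text{span}\{T\varphi_1(\chi),\dots,T\varphi_N(\chi)\}$ is generated by at most $N$ vectors, so it has dimension at most $N$. Being finite-dimensional, it is already closed in $\ell^2(\Gamma)$, so passing to the closure does not enlarge it; thus for $\mu$-almost every $\chi$ the closed span appearing in Theorem \ref{hel} coincides with this algebraic span. Consequently
$$\dim J_V(\chi)=\dim\text{span}\{T\varphi_1(\chi),\dots,T\varphi_N(\chi)\}\leq N=\sharp\mathscr{A},$$
for $\mu$-almost every $\chi\in\widehat{\Gamma}$, which is exactly the claimed bound.

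There is essentially no obstacle here, as the parenthetical remark preceding the statement already signals. The only points deserving a word of care are that the identity for $J_V$ in Theorem \ref{hel} holds merely up to a $\mu$-null set (which is why the conclusion is phrased $\mu$-a.e.), and that one must explicitly invoke the closedness of finite-dimensional subspaces of $\ell^2(\Gamma)$ in order to replace the closed span produced by Theorem \ref{hel} with the algebraic span whose dimension is manifestly at most $N$.
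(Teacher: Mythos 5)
Your argument is correct and is exactly the ``elementary argument about vector spaces'' the paper alludes to: it gives no written proof of Proposition \ref{dim}, and your route---apply the last assertion of Theorem \ref{hel} to the finite set $\mathscr{A}$, note that the span of $N$ vectors has dimension at most $N$ and is closed because it is finite-dimensional---is the intended one. Nothing is missing.
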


\section{The multiplicity function}\label{smult}
This section is devoted to the multiplicity function and some basic formulas for it.
In particular we establish in Proposition \ref{dimult} a link between the dimension function of
an invariant $\pi$-subspace and the multiplicity function of the subrepresentation 
generated by it.

As we have already seen, any time we are given a unitary representation
$$\pi:\Gamma\rightarrow \mathcal{U}({\mathcal H}),$$
we have  a Borel measure $\mu$ on $\widehat{\Gamma},$  a
 unitary operator $T$ defined as in \eqref{ti}, and  an associated
multiplicity function  $m$.

Assume now $V\subset{\mathcal H}$ is invariant under $\pi$. It determines, in an obvious way, a unitary
representation
$$\widetilde{\pi}:\Gamma\rightarrow \mathcal{U}(V),\quad
\widetilde{\pi}(\gamma)f={\pi}(\gamma)f, \quad f\in V,$$
called the subrepresentation of $\pi$ on $V$.

As above, we get a Borel measure $\widetilde{\mu}$ on $\widehat{\Gamma}$,
and measurable subsets
\begin{equation}\label{sigma1}
\dots\widetilde{\sigma}_i\subset \dots\subset\widetilde{\sigma}_2\subset\widetilde
{\sigma}_1\subset\widehat{\Gamma},
\end{equation}
 a unitary map
$
\widetilde{T}:V\rightarrow \displaystyle\bigoplus_i{L^2(\widetilde{\sigma}_i,\widetilde{\mu})}\hookrightarrow
L^2(\widehat{\Gamma},\ell^2(\Gamma),\widetilde{\mu}),
$
such that
\begin{equation}\label{intertitilde}
[\widetilde{T}(\widetilde{\pi}(\gamma)f)]_i(\chi)=(\gamma,\chi)[\widetilde{T}(f)]_i(\chi),\quad\text{for all}\;  \gamma\in \Gamma, f\in V,
\;\widetilde{\mu}\,\text{a.e.}\; \chi\in\widehat{\Gamma},
\end{equation}
and a multiplicity function, denoted by $\widetilde{m}.$

Furthermore, we denote by $\widetilde{J}$ the range function provided by 
 Theorem \ref{hel}, such that
$\widetilde{T}:V\rightarrow M_{\widetilde{J}}$.
 We shall always affix the {\em{tilde}} to objects
 related to  subrepresentations of $\pi$.

\begin{rem}\label{rap}
Note that, since $\widetilde{\pi}$ is the subrepresentation of $\pi$ on $V$,
the measure
$\widetilde{\mu}$  is absolutely continuous with respect to $\mu$
(and hence absolutely continuous); we
can also  prove it directly, and we choose to do so, since this allows to
introduce some elements of spectral theory that we will need later.

We recall that, by the
spectral theorem, see \cite{HLN}, representations $\pi$ and $\widetilde{\pi},$
are linked respectively to the projection valued measures $\Pi$ and $\widetilde{\Pi},$
in the following way:
\begin{equation}\label{spectra}
\pi(\gamma)=\int_{\widehat{\Gamma}}{(\gamma,\chi)\,d\Pi(\chi)},\quad
\widetilde{\pi}(\gamma)=\int_{\widehat{\Gamma}}{(\gamma,\chi)\,d\widetilde{\Pi}(\chi)}.
\end{equation}

The meaning of \eqref{spectra} is that for any $f,g\in{\mathcal H}$,
and $f',g'\in V,$ we have
\begin{equation}\label{value}
(\pi(\gamma)f,g)=\int_{\widehat{\Gamma}}{(\gamma,\chi)\,d m_{f,g}(\chi)},\quad
(\widetilde{\pi}(\gamma)f',g')=\int_{\widehat{\Gamma}}{(\gamma,\chi)\,d\widetilde{m}_{f',g'}(\chi)},
\end{equation}

where  measures $m_{f,g}$ and $\widetilde{m}_{f',g'}$ are defined by
\begin{equation}\label{meas}
m_{f,g}(E)=(\Pi(E)f,g),\quad \widetilde{m}_{f',g'}(E)=(\widetilde{\Pi}(E)f',g'),
\end{equation}
for all Borel sets $E\subset\widehat{\Gamma}.$

An application of Zorn's lemma yields that there exist $f\in{\mathcal H}$ and $g\in V$ such that,
for any Borelian $E\subset\widehat{\Gamma}$,
$$\mu(E)=m_{f,f}(E)=({\Pi}(E)f,f),\quad \widetilde{\mu}(E)=\widetilde{m}_{g,g}(E)=(\widetilde{\Pi}(E)g,g).$$

Moreover, for any other $h\in\ldg$ we have $m_{h,h}\ll \mu$, and,
since $\widetilde{\pi}$ is the subrepresentation of $\pi$ on $V$, we have also, by  uniqueness
of Fourier-Stieltjies transform,
$$\widetilde{\mu}(E)=(\widetilde{\Pi}(E)g,g)=({\Pi}(E)g,g),\quad \text{since}\; g\in V.$$
Finally, if $\mu(E)=0$ then $m_{g,g}(E)=0$ and, by above, $\widetilde{\mu}(E)=0.$
\end{rem}

An additional consequence of Theorem \ref{web} is the following (see \cite{Ba})
\begin{lem}\label{lemmaS}
Suppose that $\pi$ is a unitary representation of the abelian group $\Gamma$  acting
on a Hilbert space $\mathcal{H}$, and let $\nu$ and $\tau_i$ be, respectively, the
Borel measure and the Borel measurable sets as in Theorem \ref{web}.

Suppose that
$\tau'_j$ is another collection of, not necessarily nested, Borel subsets of
$\widehat{\Gamma}$, and  $T'$ is a unitary operator,
$$T':\mathcal{H}\rightarrow \bigoplus_j L^2(\tau'_j,\nu)$$ satisfying
$$[{T'}({\pi}(\gamma)f)]_j(\chi)=(\gamma,\chi)[{T'}(f)]_j(\chi),\quad\text{for all}\;  \gamma\in \Gamma, f\in \mathcal{H},
\;{\nu}\,\text{a.e.}\; \chi\in\widehat{\Gamma}.$$
Then, for $\nu-$almost all $\chi\in\widehat{\Gamma},$
$$\sum_i{I_{{\tau}_i}(\chi)}=\sum_j{I_{\tau'_j}(\chi)}.$$
\end{lem}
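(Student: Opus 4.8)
The plan is to compare the two diagonalizations through the single unitary $W=T'\circ T^{-1}$ and to show that $W$ is \emph{decomposable}, so that it acts fiberwise as a unitary between spaces whose dimensions are exactly the two counting functions $\sum_i I_{\tau_i}$ and $\sum_j I_{\tau'_j}$. The equality of these counting functions then follows from the fact that a unitary forces its domain and range to have the same dimension.

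First I would record that both $T$ and $T'$ conjugate $\pi(\gamma)$ to multiplication by the character $(\gamma,\cdot)$. Writing $M_\gamma$ and $M'_\gamma$ for multiplication by $(\gamma,\cdot)$ on $\bigoplus_i L^2(\tau_i,\nu)$ and $\bigoplus_j L^2(\tau'_j,\nu)$ respectively, the intertwining relation \eqref{inter} for the $T$ of Theorem~\ref{web} and its hypothesized analogue for $T'$ give $T\pi(\gamma)T^{-1}=M_\gamma$ and $T'\pi(\gamma)T'^{-1}=M'_\gamma$. Hence the unitary $W=T'T^{-1}$ satisfies $WM_\gamma=M'_\gamma W$ for every $\gamma\in\Gamma$. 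Next I would upgrade this to an intertwining of the full diagonal algebras: the linear span of $\{(\gamma,\cdot):\gamma\in\Gamma\}$ contains the constants (the trivial character), is closed under complex conjugation (the conjugate of a character is again a character), and separates the points of $\widehat{\Gamma}$ by Pontryagin duality, so by Stone--Weierstrass it is uniformly dense in $C(\widehat{\Gamma})$. Using boundedness of the multiplication operators together with a dominated convergence argument in the strong operator topology, the relation $WM_\phi=M'_\phi W$ extends first to all $\phi\in C(\widehat{\Gamma})$ and then to all $\phi\in L^\infty(\widehat{\Gamma},\nu)$; in particular $WI_E=I_E W$ for every Borel set $E$.

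With $W$ intertwining the diagonalizable operators of the two direct integrals, the standard theory of decomposable operators yields $W=\int^{\oplus}_{\widehat{\Gamma}}W(\chi)\,d\nu(\chi)$, where $W(\chi)$ maps the fiber of $\bigoplus_i L^2(\tau_i,\nu)$ at $\chi$ to the fiber of $\bigoplus_j L^2(\tau'_j,\nu)$ at $\chi$. These fibers have dimensions $\sum_i I_{\tau_i}(\chi)$ and $\sum_j I_{\tau'_j}(\chi)$ respectively, since the $\chi$-fiber of $\bigoplus_i L^2(\tau_i,\nu)$ is spanned by the coordinates $i$ with $\chi\in\tau_i$. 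Because $W$ is unitary, $W(\chi)$ is unitary for $\nu$-a.e. $\chi$, and a unitary between Hilbert spaces forces their dimensions to coincide; this gives $\sum_i I_{\tau_i}(\chi)=\sum_j I_{\tau'_j}(\chi)$ for $\nu$-a.e. $\chi$, as claimed.

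The main obstacle is the decomposability step, namely justifying that an operator intertwining the two diagonal $L^\infty(\widehat{\Gamma},\nu)$-algebras is fiberwise a unitary. If one prefers to avoid quoting direct-integral machinery, this can be replaced by a localization argument: supposing $\sum_i I_{\tau_i}\neq\sum_j I_{\tau'_j}$ on a set $S$ of positive $\nu$-measure, one restricts $S$ further so that both counting functions are constant there, uses $WI_S=I_S W$ to restrict $W$ to a unitary between the free $L^\infty(S,\nu)$-modules $L^2(S,\nu)^{p}$ and $L^2(S,\nu)^{q}$ of those constant ranks, and derives a contradiction from equality of module ranks; the case where one of the two functions equals $+\infty$ on a positive-measure set is treated separately by the same comparison.
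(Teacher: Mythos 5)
The paper does not actually prove Lemma \ref{lemmaS}: it is stated as a known consequence of spectral multiplicity theory and attributed to \cite{Ba} (the underlying uniqueness theorem is in \cite{HLN}), so there is no internal proof to compare against and your argument must stand on its own. It does: what you give is the standard uniqueness-of-multiplicity argument. The unitary $W=T'T^{-1}$ satisfies $WM_\gamma=M'_\gamma W$; the span of the characters is a unital, self-adjoint, point-separating subalgebra of $C(\widehat{\Gamma})$, so Stone--Weierstrass plus a bounded-convergence argument upgrades the intertwining to the full diagonal algebras; $W$ is then decomposable, and a.e.\ unitarity of the fibre maps $W(\chi)$ forces the fibre dimensions $\sum_i I_{\tau_i}(\chi)$ and $\sum_j I_{\tau'_j}(\chi)$ to agree. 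The identification of the $\chi$-fibre of $\bigoplus_i L^2(\tau_i,\nu)$ with $\C^{m(\chi)}$, $m(\chi)=\sum_i I_{\tau_i}(\chi)$, is correct since $\chi$ contributes a coordinate exactly for those $i$ with $\chi\in\tau_i$. Two steps deserve one more sentence each in a final write-up. For the passage from $C(\widehat{\Gamma})$ to $L^\infty(\widehat{\Gamma},\nu)$, say explicitly that the set of bounded Borel $\phi$ with $WM_\phi=M'_\phi W$ is closed under bounded pointwise a.e.\ limits (dominated convergence in the strong topology) and contains $C(\widehat{\Gamma})$, hence contains all bounded Borel functions. For the elementary localization variant, after cutting down to a set $S$ on which the counting functions are constant and finite, say $p$ and $q$, justify why a unitary commuting with $L^\infty(S,\nu)$ from $L^2(S,\nu)^p$ to $L^2(S,\nu)^q$ forces $p=q$: its matrix entries lie in the commutant of $L^\infty(S)$ acting with multiplicity one on $L^2(S)$, i.e.\ in $L^\infty(S)$ itself, so unitarity makes $[w_{kl}(\chi)]$ an a.e.\ unitary $q\times p$ scalar matrix, impossible unless $p=q$; the infinite-multiplicity case follows by producing, for each finite $n$, $n$ pointwise orthonormal sections and pushing them through $W(\chi)$. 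With those clarifications the proof is complete and is, in substance, the argument of the cited sources.
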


\begin{rem}
We recover, by the  lemma above, that  the multiplicity function of the
representation $\pi$ on ${\mathcal H}$ satisfies
 $$m(\chi)=\dim{{J}(\chi)},\quad\mu\text{-a.e}\;\chi\in\widehat{\Gamma},$$
if $J$ is the range function associated with the operator $T$ and ${\mathcal H}$.

 Indeed it suffices to take $T'=T,$ $\tau_i=\sigma_i,$ and $\tau'_j$ being the set
 where $\dim{{J}(\chi)}=j$.

 The same argument applies for a subrepresentation on a $\pi$-invariant subspace as well.

\end{rem}

The notion of Borel cross-section will be useful for us.
\begin{rem}\label{cross}
A Borel cross-section for the quotient map
$q:\widehat{\Gamma}\rightarrow\widehat{\Gamma}/\ker{\alpha^{\ast}}$ is a
Borel measurable  right inverse for $q,$  i.e. a map
$\widetilde{s}:\widehat{\Gamma}/\ker{\alpha^{\ast}}\rightarrow \widehat{\Gamma}$
 such that
$q\circ\widetilde{s}=\text{Id}_{\widehat{\Gamma}/\ker{\alpha^{\ast}}}.$

Since $\ker{\alpha^{\ast}}$ is closed, and $\widehat{\Gamma}$ is compact and metrizable (hence separable),
a Borel cross-section for $q$ exists by Mackey's result in \cite[Lemma 1.1]{Ma}.

It follows that there exists a measurable map $s:\widehat{\Gamma}\rightarrow\widehat{\Gamma},$ such that
$\alpha^{\ast}(s(\chi))=\chi.$

%
\end{rem}

In the sequel we shall need the following formula, contained implicitly in \cite{Ba},
which generalizes the analogous  formula obtained by Bownik and Rzeszotnik, in Corollary 2.5 of \cite{BoRz},
for shift invariant spaces in $L^2(\R^n).$
\begin{lem}\label{formulaB}
Assume we are given a closed subspace $V\subset{\mathcal H},$ and let $W=\delta(V)$, where
$\delta:{\mathcal H}\rightarrow {\mathcal H}$ is a unitary map verifying \eqref{Ba}.

Suppose $V$ is $\pi$-invariant.
Let us denote by $\widetilde{\mu},$ $\widetilde{T},$ and $\widetilde{m}$ the usual objects given by Theorem \ref{web}
related to the subrepresentation of $\pi$ on $V$ and by ${\mu^{\sharp}},$ ${T}^{\sharp},$ and ${m}^{\sharp}$
the corresponding objects for $W$. Then we have, for ${\mu}^{\sharp}$-almost
all $\chi\in\widehat{\Gamma},$
\begin{equation}\label{formulaBa}
{m}^{\sharp}(\chi)=\sum_{\alpha^{\ast}(\xi)=\chi}{\widetilde{m}(\xi)}.
\end{equation}
\end{lem}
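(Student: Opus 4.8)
The plan is to realize the subrepresentation of $\pi$ on $W=\delta(V)$ as a copy of the representation $\gamma\mapsto\widetilde{\pi}(\alpha(\gamma))$ on $V$, to transport the intertwining operator $\widetilde{T}$ accordingly, and then to \emph{unfold} it along the $N$ fibers of $\alpha^{\ast}$ in order to count multiplicities via Lemma \ref{lemmaS}. First I would use the compatibility relation: from \eqref{Ba} we get $\pi(\gamma)\delta=\delta\pi(\alpha(\gamma))$, so for $w\in W$, writing $w=\delta v$ with $v\in V$,
$$\delta^{-1}\pi(\gamma)w=\pi(\alpha(\gamma))\delta^{-1}w=\widetilde{\pi}(\alpha(\gamma))\delta^{-1}w,$$
since $\delta^{-1}w\in V$. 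Composing with $\widetilde{T}$ and using \eqref{intertitilde}, the unitary $S:=\widetilde{T}\circ\delta^{-1}:W\to\bigoplus_i L^2(\widetilde{\sigma}_i,\widetilde{\mu})$ satisfies
$$[S(\pi(\gamma)w)]_i(\xi)=(\alpha(\gamma),\xi)\,[S(w)]_i(\xi)=(\gamma,\alpha^{\ast}(\xi))\,[S(w)]_i(\xi),$$
the last equality being the definition of $\alpha^{\ast}$. Thus $S$ intertwines $\pi|_W$ with multiplication by the ``wrong'' character $(\gamma,\alpha^{\ast}(\xi))$, and the point is to convert this into multiplication by $(\gamma,\chi)$.

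To do so I would fold the $\xi$-variable through $\alpha^{\ast}$. Write $\ker\alpha^{\ast}=\{\kappa_0,\dots,\kappa_{N-1}\}$ and let $s:\widehat{\Gamma}\to\widehat{\Gamma}$ be the Borel section with $\alpha^{\ast}(s(\chi))=\chi$ provided by Remark \ref{cross}, so that $(\alpha^{\ast})^{-1}(\chi)=\{s(\chi)\kappa_n:0\le n\le N-1\}$. Since $\alpha^{\ast}$ is a surjective continuous endomorphism of the compact group $\widehat{\Gamma}$ with finite kernel, it preserves normalized Haar measure $\lambda$, which yields the disintegration formula
$$\int_{\widehat{\Gamma}} g(\xi)\,d\lambda(\xi)=\int_{\widehat{\Gamma}}\frac1N\sum_{\alpha^{\ast}(\xi)=\chi}g(\xi)\,d\lambda(\chi),\qquad g\in L^1(\widehat{\Gamma},\lambda).$$
Recalling that we may take $\widetilde{\mu}=\lambda|_{\widetilde{\sigma}_1}$, I would then set
$$[T^{\sharp}w]_{i,n}(\chi)=\tfrac{1}{\sqrt N}\,[S(w)]_i(s(\chi)\kappa_n),\qquad \tau_{i,n}=\{\chi:\ s(\chi)\kappa_n\in\widetilde{\sigma}_i\},$$
which, by the displayed formula, is an isometry of $W$ into $\bigoplus_{i,n}L^2(\tau_{i,n},\lambda)$. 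Because $\alpha^{\ast}(s(\chi)\kappa_n)=\chi$, the operator $T^{\sharp}$ intertwines $\pi|_W$ with multiplication by the correct character $(\gamma,\chi)$.

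It remains to identify the measure and to count. The maximal spectral type of $\pi|_W$ is the pushforward $(\alpha^{\ast})_{\ast}\widetilde{\mu}$, which is equivalent to $\lambda$ on the common support $\alpha^{\ast}(\widetilde{\sigma}_1)$; by the uniqueness of $\mu^{\sharp}$ up to equivalence in Theorem \ref{web} we may take $\mu^{\sharp}=\lambda|_{\alpha^{\ast}(\widetilde{\sigma}_1)}$, the positive Radon--Nikod\'{y}m density being absorbed into $T^{\sharp}$ without altering the supports $\tau_{i,n}$. Then $T^{\sharp}$ is a unitary intertwiner of exactly the kind required by Lemma \ref{lemmaS}, whence, for $\mu^{\sharp}$-almost every $\chi$,
$$m^{\sharp}(\chi)=\sum_{i,n}I_{\tau_{i,n}}(\chi)=\sum_{n=0}^{N-1}\sum_i I_{\widetilde{\sigma}_i}(s(\chi)\kappa_n)=\sum_{n=0}^{N-1}\widetilde{m}(s(\chi)\kappa_n)=\sum_{\alpha^{\ast}(\xi)=\chi}\widetilde{m}(\xi),$$
which is \eqref{formulaBa}. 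The main obstacle is the construction of the folding unitary $T^{\sharp}$: establishing the disintegration formula for the $N$-to-$1$ endomorphism $\alpha^{\ast}$, controlling measurability through the Borel section of Remark \ref{cross}, and verifying $(\alpha^{\ast})_{\ast}\widetilde{\mu}\sim\mu^{\sharp}$ so that, after the harmless density adjustment, $T^{\sharp}$ lands in $L^2$ against the genuine spectral measure of $W$.
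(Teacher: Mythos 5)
Your proposal is correct and follows essentially the same route as the paper: the paper likewise transports $\widetilde{T}\circ\delta^{-1}$ through the Borel section $s$ of Remark \ref{cross}, defines the sets $\tau_{i,\eta}=\{\chi:\ s(\chi)\eta\in\widetilde{\sigma}_i\}$ for $\eta\in\ker\alpha^{\ast}$, checks the same intertwining computation, and concludes by Lemma \ref{lemmaS}. The only difference is that you make explicit the $1/\sqrt{N}$ normalization, the disintegration of Haar measure along the fibers of $\alpha^{\ast}$, and the identification $\mu^{\sharp}\sim(\alpha^{\ast})_{\ast}\widetilde{\mu}$ needed to see that the transported operator is genuinely unitary against the spectral measure of $W$ --- details the paper leaves implicit.
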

\begin{proof}
Let $s:\widehat{\Gamma}\rightarrow \widehat{\Gamma}$ be the map linked to the Borel cross-section for the quotient map
$q:\widehat{\Gamma}\rightarrow\widehat{\Gamma}/\ker{\alpha^{\ast}},$
as in Remark \ref{cross}.

For any index $i$ and any $\eta\in\ker{\alpha^{\ast}}$(recall that
$|\ker{\alpha^{\ast}}|=N$), let us define
a collection of Borel measurable sets by
$$\tau_{i,\eta}=\{\chi\in\widehat{\Gamma},\; s(\chi)\,\eta\in\widetilde{\sigma}_i\},$$
and the map
$$T':\delta(V)\rightarrow\bigoplus_{i,\eta}{L^2(\tau_{i,\eta},{\mu}^{\sharp})},$$
by
$$[T'(f)]_{i,\eta}(\chi)=[\widetilde{T}(\delta^{-1}(f))]_i(s(\chi)\eta),\quad
\chi\in\tau_{i,\eta}.$$
We have by \eqref{Ba}, and \eqref{intertitilde}, for $f\in \delta(V),$ $\chi\in\tau_{i,\eta},$
\begin{eqnarray*}
[T'(\pi(\gamma)f)]_{i,\eta}(\chi)&=&[\widetilde{T}(\delta^{-1}(\pi(\gamma)f))]_i\,(s(\chi)\,\eta)\\
&=&[\widetilde{T}(\pi(\alpha(\gamma))\delta^{-1}(f))]_i\,(s(\chi)\,\eta)\\
&=&(\alpha(\gamma),s(\chi)\,\eta)\,[\widetilde{T}(\delta^{-1}(f))]_i\, (s(\chi)\,\eta)\\
&=&(\gamma,\alpha^{\ast}(s(\chi)\,\eta))\,[T'(f)]_{i,\eta} (\chi)\\
&=&(\gamma,\chi)\,[T'(f)]_{i,\eta} (\chi).
\end{eqnarray*}
Hence, by Lemma \ref{lemmaS}, the multiplicity function ${m}^{\sharp}$ is given,
${\mu}^{\sharp}$-a.e. $\chi\in\widehat{\Gamma},$ by
\begin{eqnarray*}
{m}^{\sharp}(\chi)&=&\sum_{i,\eta}{{I}_{\tau_{i,\eta}}(\chi)}
=\sum_{\eta}\sum_i{{I}_{\widetilde{\sigma}_{i}}(s(\chi)\,\eta)}\\
&=&\sum_{\eta}{\widetilde{m}(s(\chi)\,\eta)}
=\sum_{\alpha^{\ast}(\xi)=\chi}{\widetilde{m}(\xi)}.
\end{eqnarray*}
\end{proof}

Any time we have a $\pi$-invariant subspace $V$, we are given two range functions:
 $J_V$ by Theorem \ref{hel}, and $\widetilde{J}$. It
is worth to compare the respective dimensions  $\dim J_V(\chi)$ and
$\dim \widetilde{J}(\chi):$
in the following proposition we use the fact that $\widetilde{\mu}$ is absolutely
continuous with respect to $\mu$ to show that a relation always exists.

\begin{prop}\label{dimult}
 Let $V\subset{\mathcal H}$ be a
$\pi$-invariant subspace, and $J_V$ be the range function associated with $V$, provided
by Theorem \ref{hel}. Consider  the subrepresentation
of $\pi$ on $V$, $\widetilde{\pi}:\Gamma\rightarrow\mathcal{U}(V)$, the
 range function $\widetilde{J}$ associated with the unitary map $\widetilde{T},$
and the multiplicity function $\widetilde{m}.$

Then for  $\widetilde{\mu}$ almost all $\chi\in\widehat{\Gamma}$,  $\widetilde{m}(\chi)=\dim \widetilde{J}(\chi)\leq \dim{J_V(\chi)}.$
\end{prop}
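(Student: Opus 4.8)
The plan is to split the stated chain into two stages. The equality $\widetilde m(\chi)=\dim\widetilde J(\chi)$ for $\widetilde\mu$-a.e.\ $\chi$ is already available: it is precisely the content of the Remark following Lemma \ref{lemmaS}, applied to the subrepresentation $\widetilde\pi$ on $V$ (take there $T'=\widetilde T$, $\tau_i=\widetilde\sigma_i$, $\tau'_j$ equal to the set where $\dim\widetilde J(\chi)=j$, and measure $\nu=\widetilde\mu$). Thus the whole difficulty lies in the inequality $\dim\widetilde J(\chi)\le\dim J_V(\chi)$, and my main device will be to compare the two matrix-valued spectral densities attached to a single countable generating set of $V$; in fact this comparison will yield equality, which is stronger than required.

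First I would fix, by Theorem \ref{hel}, a countable set $\mathscr A=\{\varphi_k\}_{k\in\N}\subset V$ with $V=\overline{\mathrm{span}}\{\pi(\gamma)\varphi_k\}$, so that simultaneously $J_V(\chi)=\overline{\mathrm{span}}\{T\varphi_k(\chi):k\in\N\}$ ($\mu$-a.e.) and $\widetilde J(\chi)=\overline{\mathrm{span}}\{\widetilde T\varphi_k(\chi):k\in\N\}$ ($\widetilde\mu$-a.e.), the latter being Theorem \ref{hel} applied to $\widetilde\pi$ on $V$ (with $V$ now the ambient space). Since the dimension of a countably generated closed span is the supremum over $n$ of the dimensions of the spans of the first $n$ generators, it suffices to compare, for each fixed $n$, the ranks of the two $n\times n$ Gram matrices
$$G_n(\chi)=\big[(T\varphi_j(\chi),T\varphi_k(\chi))\big]_{j,k\le n},\qquad \widetilde G_n(\chi)=\big[(\widetilde T\varphi_j(\chi),\widetilde T\varphi_k(\chi))\big]_{j,k\le n},$$
using that the rank of a Gram matrix equals the dimension of the span of the underlying vectors.

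The key point is to identify these Gram entries as Radon--Nikod\'ym densities of the spectral measures of \eqref{meas}. From property 2) of Theorem \ref{web} (that is, $T\Pi(E)=I_E\,T$ and likewise $\widetilde T\widetilde\Pi(E)=I_E\widetilde T$) and unitarity of $T,\widetilde T$, one gets
$$m_{\varphi_j,\varphi_k}(E)=\int_E (T\varphi_j(\chi),T\varphi_k(\chi))\,d\mu(\chi),\qquad \widetilde m_{\varphi_j,\varphi_k}(E)=\int_E (\widetilde T\varphi_j(\chi),\widetilde T\varphi_k(\chi))\,d\widetilde\mu(\chi).$$
On the other hand, since $\widetilde\pi=\pi|_V$ and $\varphi_j,\varphi_k\in V$, we have $(\widetilde\pi(\gamma)\varphi_j,\varphi_k)=(\pi(\gamma)\varphi_j,\varphi_k)$ for all $\gamma\in\Gamma$; these are the Fourier--Stieltjes transforms of $\widetilde m_{\varphi_j,\varphi_k}$ and $m_{\varphi_j,\varphi_k}$, so by uniqueness these complex measures coincide (this is exactly the argument of Remark \ref{rap}, now carried out for the cross terms as well as the diagonal ones). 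Writing $\rho=d\widetilde\mu/d\mu$, which exists because $\widetilde\mu\ll\mu$ by Remark \ref{rap}, I would equate the two integral expressions to obtain $(T\varphi_j(\chi),T\varphi_k(\chi))=\rho(\chi)\,(\widetilde T\varphi_j(\chi),\widetilde T\varphi_k(\chi))$ for $\mu$-a.e.\ $\chi$ and all $j,k\le n$, that is, $G_n(\chi)=\rho(\chi)\,\widetilde G_n(\chi)$ $\mu$-a.e.

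To conclude, note that $\{\rho=0\}$ is $\widetilde\mu$-null, since $\widetilde\mu(\{\rho=0\})=\int_{\{\rho=0\}}\rho\,d\mu=0$; hence $\widetilde\mu$-a.e.\ we have $\rho(\chi)>0$, where $G_n(\chi)$ is a positive scalar multiple of $\widetilde G_n(\chi)$ and therefore $\mathrm{rank}\,G_n(\chi)=\mathrm{rank}\,\widetilde G_n(\chi)$. Taking the supremum over $n$ (a countable union of $\widetilde\mu$-null exceptional sets) gives $\dim\widetilde J(\chi)=\dim J_V(\chi)$ for $\widetilde\mu$-a.e.\ $\chi$, which implies the claimed inequality. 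I expect the main obstacle to be the careful bookkeeping in the identity $G_n=\rho\,\widetilde G_n$: one must keep straight that the two densities are taken with respect to the \emph{different} measures $\mu$ and $\widetilde\mu$, and must justify the equality of the complex measures $m_{\varphi_j,\varphi_k}=\widetilde m_{\varphi_j,\varphi_k}$ (not merely of their variations), which is exactly where the absolute continuity $\widetilde\mu\ll\mu$ together with uniqueness of the Fourier--Stieltjes transform enter decisively.
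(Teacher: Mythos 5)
Your proposal is correct, and it reaches the conclusion by a genuinely different (and in fact stronger) route than the paper. The paper fixes $n$, restricts to the level set $\widetilde{\tau}_n=\{\widetilde{m}=n\}$, chooses sections $F_1,\dots,F_n\in M_{\widetilde{J}}$ that are pointwise orthonormal there (quoting Helson's notes for their existence), and pushes them through the unitary $T\circ\widetilde{T}^{-1}$; computing $(\pi(\gamma)\widetilde{T}^{-1}F_j,\widetilde{T}^{-1}F_h)$ two ways and invoking the determining-set Lemma \ref{comp} shows the images are pointwise orthogonal with common norm $w(\chi)=d\widetilde{\mu}/d\mu$, whence $\dim J_V(\chi)\geq n$ on $\widetilde{\tau}_n$ (with a separate remark for the infinite-multiplicity set). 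You instead apply the very same mechanism --- transport of the spectral measures $m_{\varphi_j,\varphi_k}$, $\widetilde{m}_{\varphi_j,\varphi_k}$ through $T$ and $\widetilde{T}$ via property 2) of Theorem \ref{web}, followed by Fourier--Stieltjes uniqueness --- but to a single countable generating set of $V$ rather than to pointwise orthonormal sections, obtaining the pointwise Gram identity $G_n(\chi)=\rho(\chi)\,\widetilde{G}_n(\chi)$ and hence equality of ranks wherever $\rho>0$. This buys you three things: you avoid the auxiliary existence result for pointwise orthonormal sections, you treat finite and infinite multiplicity uniformly by taking a supremum over $n$, and you obtain the sharper conclusion $\dim\widetilde{J}(\chi)=\dim J_V(\chi)$ $\widetilde{\mu}$-a.e. rather than only the inequality stated. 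All the steps check out: the cross-term extension of Remark \ref{rap} is legitimate (uniqueness of Fourier--Stieltjes transforms applies to finite complex Borel measures on the compact group $\widehat{\Gamma}$, exactly as the paper already uses it on the diagonal), the identity of densities holds $\mu$-a.e. simultaneously for the countably many pairs $(j,k)$, and $\{\rho=0\}$ is $\widetilde{\mu}$-null, so the rank comparison is valid where it is needed.
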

\begin{proof}
By Helson's theorem, Theorem \ref{hel}, we get
${T}(V)=M_{{J_V}},$ and $\widetilde{T}(V)=M_{{\widetilde{J}}}.$
The composition $T_{|V}\circ \widetilde{T}^{-1}:M_{\widetilde{J}}\rightarrow
M_{{J_V}}$ is a unitary map. We call it $T\circ \widetilde{T}^{-1}$ for short.

Consider $n\in\N$, and the  set $\widetilde{\tau}_n$ where
$$\widetilde{m}(\chi)= \dim \widetilde{J}(\chi)=n.$$

 Let $F_1,\dots,F_n\in M_{\widetilde{J}},$ be pointwise orthonormal
$\widetilde{\mu}$ a.e. on $\widetilde{\tau}_n$  and vanishing out of it (see \cite{HLN} p.12
problem 4).
We aim to show that pointwise orthonormality of
$F_1(\chi),\dots,F_n(\chi)\in\widetilde{J}(\chi)$
implies pointwise orthonormality of a same number of elements in $J_V(\chi).$
By \eqref{ti} we have
\begin{eqnarray}
& &\int_{\widehat{\Gamma}}{(\gamma,\chi)\,
(T\circ\widetilde{T}^{-1}(F_j)(\chi),T\circ\widetilde{T}^{-1}(F_h)(\chi))
\; d\mu(\chi)}\label{point}\\
&=&
\int_{\widehat{\Gamma}}{
(T(\pi(\gamma)\widetilde{T}^{-1}F_j)(\chi),T(\widetilde{T}^{-1}F_h)(\chi)
)\; d\mu(\chi)}.\nonumber
\end{eqnarray}

Since $T$ is a unitary operator, by recalling the definition of inner product in the vector space
$L^2(\widehat{\Gamma},\ell^2(\Gamma),{\mu}),$  the latter is equal to
$$(\pi(\gamma)\,\widetilde{T}^{-1}F_j,\widetilde{T}^{-1}F_h)
=(\widetilde{\pi}(\gamma)\,\widetilde{T}^{-1}F_j,\widetilde{T}^{-1}F_h)
=\int_{\widehat{\Gamma}}{(\gamma,\chi)\;
d\widetilde{\mu}_{\widetilde{T}^{-1}F_j,\widetilde{T}^{-1}F_h}(\chi)},
$$
see \eqref{value}. The measure
$\widetilde{\mu}_{\widetilde{T}^{-1}F_j,\widetilde{T}^{-1}F_h}$
is defined in any Borel set $E\subset\widehat{\Gamma},$ in terms of the 
projection valued
 measure $\widetilde{\Pi}$ associated with $\widetilde{\mu}$, see \eqref{meas}, by
\begin{eqnarray*}
\widetilde{\mu}_{\widetilde{T}^{-1}F_j,\widetilde{T}^{-1}F_h}(E)&=&
(\widetilde{\Pi}(E)\,\widetilde{T}^{-1}F_j,\widetilde{T}^{-1}F_h)\\
&=&(\widetilde{T}^{-1}(I_E\, F_j),\widetilde{T}^{-1}(F_h))\\
&=&(I_E\, F_j,F_h),
\end{eqnarray*}
the last two lines justified by the commuting properties of $\widetilde{T}^{-1}$
between $\widetilde{\Pi}$ and the canonical projection valued measure, and the unitariness of
the operator $\widetilde{T}^{-1}.$

But, for our choice of $F_1,\dots,F_n\in M_{\widetilde{J}},$
\begin{eqnarray*}
(I_E\, F_j,F_h)&=&\int_{E}{(F_j(\chi),F_h(\chi))\; d\widetilde{\mu}(\chi)}=
\int_{E\cap\widetilde{\tau}_n}{(F_j(\chi),F_h(\chi))\; d\widetilde{\mu}(\chi)}\\
&=&\delta_{j,h} \,\widetilde{\mu}(E\cap\tau_n).
\end{eqnarray*}

Hence the measure $\widetilde{\mu}_{\widetilde{T}^{-1}F_j,\widetilde{T}^{-1}F_h}$
is  identically zero whenever $j\neq h$, while for $j=h$ is nothing else that
$\widetilde{\mu}$ restricted to $\widetilde{\tau}_n$.

It follows that \eqref{point} is  identically zero whenever $j\neq h$,
while for $j=h,$ if we call $w$ the nonnegative measurable function on
$\widehat{\Gamma},$ provided by the Radon-Nikodym Theorem, such that $\widetilde{\mu}(E)=\int_E{w(\chi)\, d\mu(\chi)},$ for any Borel set $E$,
\eqref{point} is equal to
$$\int_{\widehat{\Gamma}}{(\gamma,\chi)\,
\|T\circ\widetilde{T}^{-1}(F_j)(\chi)\|^2\; d\mu(\chi)}=
\int_{\widehat{\Gamma}}{(\gamma,\chi)\;
I_{\widetilde{\tau}_n}(\chi)\;w(\chi)\;d{\mu}(\chi)}.$$

By Lemma \ref{comp}, we get  for  $\mu$ (and hence $\widetilde{\mu}$)
almost all $\chi\in\widetilde{\tau}_n,$
$$(T\circ\widetilde{T}^{-1}(F_j)(\chi),T\circ\widetilde{T}^{-1}(F_h)
(\chi))=\delta_{j,h}\,w(\chi).$$

The set where $w(\chi)=0$ has   zero measure with respect to
$\widetilde{\mu}$, so we get for $\widetilde{\mu}$
almost all $\chi\in\widetilde{\tau}_n,$
$0\not\equiv T\circ\widetilde{T}^{-1}(F_j)(\chi)\in J_V(\chi),$ and
$\dim J_V(\chi)\geq n.$

The same argument shows that in the set where $\dim \widetilde{J}(\chi)=+\infty$ then
also $\dim {J_V}(\chi)=+\infty,$ (see \cite[Theorem 2, p.8]{HLN}), hence  we get, $\widetilde{\mu}$
a.e.,   $\dim J_V(\chi)\geq \dim \widetilde{J}(\chi), $ as desired.
\end{proof}

\section{Main results}\label{main}
We first briefly discuss the linear independence of  translates
in $\ldg$
$\{T_{\gamma}f,\,\gamma\in\Gamma\}$, compared to linear independence of
$\{\pi{(\gamma)}\psi,\,\gamma\in\Gamma\}$, $\psi\in {\mathcal H}$ (recall that $T_{\gamma} f=f(\cdot-\gamma)$).

In \cite[Corollary 4.3.14]{Ku} Kutyniok has proved, among other things, that,
for any countable set $\Gamma\subset G$,
 the set of (left) translates
$\{T_{\gamma} f,\, \gamma\in \Gamma\}$
is linearly independent
for any $ 0\neq f\in L^2(G),$ if and only if, for any finite subset $\Lambda\subset
\Gamma,$ and
for any $(c_{\gamma})_{\gamma\in \Lambda}\subset \C,$   $(c_{\gamma})_{\gamma\in \Lambda}\neq 0,$ we have
$$\sum_{\gamma\in \Lambda}{c_{\gamma}\, ({\gamma},\chi)}\neq 0, \quad a.e.\; \chi\in\widehat{G},$$
where a.e. means with respect the Haar measure on $\widehat{G}$.

It turns out that the above equivalence  still holds if the  countable set $\Gamma\subset
G$ and the left translation are replaced, respectively,
by a countable, closed subgroup and a unitary representation with
corresponding measure $\mu$ absolutely continuous
as shown in the following lemma.

\begin{lem}\label{kuty}
Let $G$ be a locally compact abelian group. Let $\Gamma\subset G$ be a countable
closed subgroup. Let $\lambda$ be the Haar measure on $\widehat{\Gamma}$.

Then the following conditions are equivalent.
\begin{itemize}
\item[(i)] The set of (left) translates
$\{T_{\gamma} f,\, \gamma\in \Gamma\}$  is linearly independent
for any $ 0\neq f\in L^2(G);$
\item[(ii)] For any finite set $\Lambda\subset \Gamma$, and for any
$(c_{\gamma})_{\gamma\in \Lambda}\subset \C,$   $(c_{\gamma})_{\gamma\in \Lambda}\neq 0,$ we have
$$\sum_{\gamma\in \Lambda}{c_{\gamma}\, ({\gamma},\chi)}\neq 0, \quad \lambda-
a.e.\; \chi\in\widehat{\Gamma};$$
\item[(iii)] For any separable Hilbert space ${\mathcal K}$ and unitary representation
$\pi:\Gamma\rightarrow\mathcal{U}({\mathcal K}),$
with
corresponding measure $\mu\ll\lambda,$ and for any $0\neq f\in {\mathcal K},$  the set
$\{\pi(\gamma) f,\, \gamma\in \Gamma\}$ is linearly independent;
\item[(iv)]For any unitary representation
$\pi:\Gamma\rightarrow\mathcal{U}(L^2(G)),$
with
corresponding measure $\mu\ll\lambda,$ and for any $0\neq f\in L^2(G),$  the set
$\{\pi(\gamma) f,\, \gamma\in \Gamma\}$ is linearly independent.
\end{itemize}
\end{lem}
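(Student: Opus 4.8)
The plan is to run the cyclic chain of implications
$(i)\Rightarrow(ii)\Rightarrow(iii)\Rightarrow(iv)\Rightarrow(i)$,
so that each equivalence follows from a single arrow. The conceptual core is the step $(ii)\Rightarrow(iii)$, which is where the spectral decomposition of Theorem \ref{web} and the hypothesis $\mu\ll\lambda$ do the work; the remaining arrows are either immediate or amount to transferring Kutyniok's criterion from $\widehat{G}$ to $\widehat{\Gamma}$.

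For $(i)\Leftrightarrow(ii)$ I would appeal to Kutyniok's characterization \cite[Corollary 4.3.14]{Ku}, which applies to the countable set $\Gamma\subset G$ and gives linear independence of $\{T_\gamma f\}$ precisely when $\sum_{\gamma\in\Lambda}c_\gamma(\gamma,\xi)\neq0$ for a.e.\ $\xi\in\widehat{G}$ (with respect to Haar measure on $\widehat{G}$). To reconcile this with $(ii)$, observe that the symbol $\xi\mapsto\sum_{\gamma\in\Lambda}c_\gamma(\gamma,\xi)$ depends only on the restriction $\xi|_\Gamma$, i.e.\ it factors through the restriction homomorphism $R:\widehat{G}\to\widehat{\Gamma}\cong\widehat{G}/\Gamma^\perp$. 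Hence its zero set in $\widehat{G}$ is exactly $R^{-1}(Z)$, where $Z\subset\widehat{\Gamma}$ is its zero set in $\widehat{\Gamma}$, and by Weil's integration formula the Haar measure of $R^{-1}(Z)$ vanishes if and only if $\lambda(Z)=0$. Thus Kutyniok's $\widehat{G}$-a.e.\ condition is equivalent to the $\widehat{\Gamma}$-a.e.\ condition $(ii)$.

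The heart of the argument is $(ii)\Rightarrow(iii)$. Given a representation $\pi$ with associated measure $\mu\ll\lambda$ and $0\neq f\in{\mathcal K}$, I would apply the unitary $T$ of Theorem \ref{web} to a hypothetical finite relation $\sum_{\gamma\in\Lambda}c_\gamma\pi(\gamma)f=0$. By the intertwining identity \eqref{inter}, each component becomes $\bigl(\sum_{\gamma\in\Lambda}c_\gamma(\gamma,\chi)\bigr)[Tf]_i(\chi)=0$ for $\mu$-a.e.\ $\chi$ and every $i$. If the coefficients are not all zero, $(ii)$ forces the scalar factor to be nonzero $\lambda$-a.e., hence $\mu$-a.e.\ since $\mu\ll\lambda$; therefore $[Tf]_i(\chi)=0$ $\mu$-a.e.\ for all $i$, i.e.\ $Tf=0$, whence $f=0$ by injectivity of $T$---contradicting $f\neq0$. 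So every such relation is trivial, and $\{\pi(\gamma)f\}$ is linearly independent.

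Finally, $(iii)\Rightarrow(iv)$ is immediate, since $G$ is second countable and hence $L^2(G)$ is separable, so $(iv)$ is just the instance ${\mathcal K}=L^2(G)$ of $(iii)$. For $(iv)\Rightarrow(i)$ I would note that left translation $\gamma\mapsto T_\gamma$ is itself a unitary representation of $\Gamma$ on $L^2(G)$; conjugating by the Plancherel transform turns it into multiplication by $\overline{(\gamma,\cdot)}$ on $L^2(\widehat{G})$, and since this character factors through $R$, the scalar spectral measures $m_{h,h}(E)=\int_{R^{-1}(E)}|\hat h|^2\,d\xi$ are, again by Weil's formula, absolutely continuous with respect to $\lambda$ (cf.\ Remark \ref{rap}). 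Thus the translation representation meets the hypothesis $\mu\ll\lambda$, and applying $(iv)$ to it returns exactly $(i)$. I expect the main obstacle to be precisely these measure-transfer points---checking via Weil's integration formula that null sets correspond under $R$ (the fibers $\Gamma^\perp$ need not carry finite Haar measure, so one must argue that integrating over a $\lambda$-null base set still yields a null set) and that the translation representation has absolutely continuous spectral type---whereas the spectral step $(ii)\Rightarrow(iii)$ itself is short.
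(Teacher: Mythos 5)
Your proof is correct and follows essentially the same route as the paper: the cyclic chain (i)$\Rightarrow$(ii)$\Rightarrow$(iii)$\Rightarrow$(iv)$\Rightarrow$(i), with Kutyniok's criterion supplying (i)$\Rightarrow$(ii) and the spectral decomposition $T$ of Theorem~\ref{web} together with $\mu\ll\lambda$ giving the key step (ii)$\Rightarrow$(iii), exactly as in the paper. If anything you are more careful than the paper on the transfer of null sets from $\widehat{G}$ to $\widehat{\Gamma}$ and on (iv)$\Rightarrow$(i), where the paper treats as obvious the fact that the translation representation has spectral measure absolutely continuous with respect to $\lambda$, which you verify via Weil's integration formula.
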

\begin{proof} (i) implies (ii) is part of the statement of
 Corollary 4.3.14 in \cite{Ku} together with the observation that the hypotheses on $\Gamma$ guarantee
 that any character of $\Gamma$ extends to a character of $G$, \cite{Ru}. Obviously (iii) implies (iv) which implies (i), so we need to show
 (ii) implies (iii). To this purpose we recall the unitary operator $T$ defined in
 \eqref{ti} and associated with $\pi$. Let $0\neq f\in {\mathcal K},$ and $F\subset\Gamma$ be
 a finite subset. If $(c_{\gamma})_{\gamma\in F}\subset \C,$
 $(c_{\gamma})_{\gamma\in F}\neq 0,$ is such that
 $\sum_{\gamma\in F}{c_{\gamma}\pi(\gamma)f}\equiv 0,$ we get that
 $0\equiv
 \sum_{\gamma\in F}{c_{\gamma}\, T(\pi(\gamma)f)}$
 implies
$$
 \left(\sum_{\gamma\in F}{c_{\gamma}\,(\gamma,\chi)}\right)
 [ T f]_i(\chi) =\sum_{\gamma\in F}{c_{\gamma}\,[ T(\pi(\gamma)f)]_i}(\chi)= 0,\quad
 \mu-\text{a.e.}\, \chi\in\widehat{\Gamma},
$$
 for any index $i$ arising in the definition of $T$.
 By (ii) and absolute continuity of the measure, the sum $\left(\sum_{\gamma\in F}{c_{\gamma}\,(\gamma,\chi)}\right)$ is
 $\mu$-a.e. different from zero, yielding $[ T f]_i(\chi)=0$ for all $i$ and
 $\mu$-almost all
 $\chi\in\widehat{\Gamma}.$ This implies $f\equiv 0,$ since $T$ is unitary,
 against the assumption $f\neq 0$.
\end{proof}

We now return to questions about linear independence of the  affine system $Y$
and the role played by the endomorphism $\alpha$ defined in Section \ref{Hypo}.

A little technical lemma anticipates one of the main results, which explores the
behavior of the space $V_0$ of negative translates, as in \cite[Theorem 3.4]{BoSp}.
\begin{lem}\label{lemma1}
Under the hypotheses \eqref{Ba} on $\alpha$ and $\alpha(\Gamma),$ there exists
a finite set $\nu_1,\dots,\nu_N\in\Gamma$ with the following property:
 for any
$\gamma\in\Gamma,$ and for any $j\in \N,$
there exists $\eta\in\Gamma$  such that $\gamma=\alpha^j(\eta)\nu_i$ for some $i=1,\dots,N.$
\end{lem}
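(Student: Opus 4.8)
The plan is to exploit the fact that $\alpha(\Gamma)$ has finite index $N$ in $\Gamma$, and to use the coset representatives for the first power together with an inductive argument on $j$. First I would fix a complete set of coset representatives $\nu_1,\dots,\nu_N\in\Gamma$ for the quotient $\Gamma/\alpha(\Gamma)$, so that every $\gamma\in\Gamma$ can be written uniquely (modulo $\alpha(\Gamma)$) as $\gamma=\alpha(\eta')\nu_i$ for some $\eta'\in\Gamma$ and some $i\in\{1,\dots,N\}$. This settles the case $j=1$ directly.

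For general $j\in\N$ I would proceed by induction. The key observation is that $\alpha$ is an endomorphism, so $\alpha^j(\Gamma)=\alpha^{j-1}(\alpha(\Gamma))$, and applying $\alpha^{j-1}$ to the decomposition at level $1$ lets us push the factorization up one power at a time. Concretely, given $\gamma$ and assuming the statement holds for $j-1$, I would write $\gamma=\alpha^{j-1}(\eta')\nu_i$ for a suitable $\eta'\in\Gamma$ and representative $\nu_i$; then, applying the level-$1$ decomposition to $\eta'$ itself, namely $\eta'=\alpha(\eta)\nu_k$, and substituting, I would obtain $\gamma=\alpha^{j-1}(\alpha(\eta)\nu_k)\nu_i=\alpha^j(\eta)\,\alpha^{j-1}(\nu_k)\nu_i$.

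The main obstacle, and the point requiring care, is that this naive substitution produces a trailing factor $\alpha^{j-1}(\nu_k)\nu_i$ that depends on $j$ and need not lie in the fixed finite set $\{\nu_1,\dots,\nu_N\}$; so the induction as stated above does not immediately close. The correct device is instead to induct by stripping one factor of $\alpha$ from the \emph{outside}: write $\gamma=\alpha(\eta')\nu_i$ at level $1$, and then apply the inductive hypothesis at level $j-1$ \emph{to the element} $\eta'$ to get $\eta'=\alpha^{j-1}(\eta)\nu_{i'}$; substituting gives $\gamma=\alpha(\alpha^{j-1}(\eta)\nu_{i'})\nu_i=\alpha^j(\eta)\,\alpha(\nu_{i'})\nu_i$, which again suffers from a $j$-independent but nonetheless non-representative tail. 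To genuinely control the tail I would enlarge the finite set: rather than insisting the remainder be one of the original $N$ representatives, I would take $\{\nu_1,\dots,\nu_N\}$ to be representatives of $\Gamma/\bigcap$ or, more cleanly, observe that since $\alpha(\Gamma)\subset\Gamma$ has index $N$, the set of all possible tails arising as products $\alpha^{j-1}(\nu_k)\nu_i$ still lies in finitely many cosets of $\alpha^j(\Gamma)$, and by re-absorbing the $\alpha^j$-part into $\eta$ one reduces the tail to a fixed finite list.

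Thus the clean statement to prove is that $\{\nu_1,\dots,\nu_N\}$, chosen as coset representatives for $\Gamma/\alpha(\Gamma)$, already suffices, with the induction carried out so that at each stage the element being decomposed is the pre-image $\eta\in\Gamma$ rather than a growing product: given $\gamma$, solve $\gamma=\alpha(\eta_1)\nu_{i_1}$ only when $j=1$, and for larger $j$ note that $\Gamma=\bigsqcup_i \alpha^j(\Gamma)\cdot(\text{finite set})$ because $[\Gamma:\alpha^j(\Gamma)]=N^j<\infty$; the finite set here is $j$-dependent in size, so the actual content of the lemma is the sharper claim that a \emph{single} $N$-element set works for \emph{all} $j$ simultaneously. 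I expect the real work to be verifying this uniformity, which should follow from the multiplicativity of the index together with the fact that $\alpha^j(\Gamma)\subset\alpha^{j-1}(\Gamma)\subset\cdots\subset\Gamma$ forms a nested chain, allowing the representatives to be chosen compatibly so that the decomposition at each level refines the previous one. This nested-refinement argument is the heart of the proof, and once it is set up the explicit formula $\gamma=\alpha^j(\eta)\nu_i$ drops out by collecting the $\alpha$-factors.
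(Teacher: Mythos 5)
You have put your finger on the genuine difficulty, which the paper's own one‑line proof glosses over: iterating the level‑one decomposition gives
$\gamma=\alpha^j(\eta)\,\alpha^{j-1}(\nu_{i_j})\cdots\alpha(\nu_{i_2})\nu_{i_1}$,
and the tail does not stay inside the original $N$‑element set. (The paper's proof is exactly your first paragraph plus the sentence ``by a recursive argument it is sufficient to consider $j=1$,'' so it really only establishes the weaker, $j$‑dependent statement.) However, the repair you propose --- that a single $N$‑element set can be made to work for every $j$ by choosing representatives ``compatibly'' along the nested chain $\alpha^j(\Gamma)\subset\cdots\subset\alpha(\Gamma)\subset\Gamma$ --- cannot succeed. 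Since $\alpha$ is injective, it induces a bijection $\Gamma/\alpha(\Gamma)\to\alpha^{k}(\Gamma)/\alpha^{k+1}(\Gamma)$ for each $k$, so by the tower law $[\Gamma:\alpha^{j}(\Gamma)]=N^{j}$. Any set $S$ with $\Gamma=\alpha^{j}(\Gamma)\cdot S$ must meet all $N^{j}$ cosets of $\alpha^{j}(\Gamma)$ and hence must have at least $N^{j}$ elements; for $N>1$ (which the paper assumes) and $j\geq 2$ no $N$‑element set can do the job. Concretely, with $\Gamma=\Z$, $\alpha(k)=2k$, $N=2$, $\nu_1=0$, $\nu_2=1$ and $j=2$, the element $\gamma=2$ is not of the form $4\eta+\nu_i$. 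So the uniformity you set out to verify is not merely hard --- it is false, and no choice of nested representatives will produce it.

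What your first inductive computation \emph{does} prove, and what is actually used later (in the proof of Theorem \ref{teorema3.4}, where only finitely many exponents $j$ occur at once), is the $j$‑dependent version: for each $j\in\N$ the finite set of $N^{j}$ products $\alpha^{j-1}(\nu_{i_j})\cdots\alpha(\nu_{i_2})\nu_{i_1}$, with each $i_k\in\{1,\dots,N\}$, has the property that every $\gamma\in\Gamma$ equals $\alpha^{j}(\eta)$ times one of them. I would advise stating and proving that version (finite set allowed to depend on $j$) rather than the literal statement; your recursion then closes immediately and the application in Section \ref{main} is unaffected, since a finite union of finite sets of representatives is still finite.
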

\begin{proof}
By a recursive argument it is sufficient to consider $j=1.$

Assume $$\left|\Gamma/{\alpha(\Gamma)}\right|=N,$$ and let $\nu_1,\nu_2,\dots, \nu_N\in\Gamma$
be a complete set of coset representatives. If $\gamma\in\Gamma,$ let $i=1,\dots,N$ such that
$[\gamma]=[\nu_i].$ Then $\gamma {\nu_i}^{-1}\in\alpha(\Gamma)$ and so $\gamma=\alpha(\eta)\nu_i$
for some $\eta\in\Gamma.$
\end{proof}

As in the previous section we denote by $T$, $\mu$ and $m$ the usual objects
linked to the representation $\pi$.

The following remark is crucial in the proof of what follows.

We observe that the compatibility condition \eqref{Ba1} implies that, for any
$M\in\N$, the representation $\pi\circ\alpha^M:\Gamma\rightarrow{\mathcal U}({\mathcal H}),$ is
 unitarily equivalent to $\pi$, being $\delta^M:{\mathcal H}\rightarrow{\mathcal H}$
 the intertwining operator. It follows that the corresponding measures $\mu$ and,
say $\mu_M$, are
equivalent and the multiplicity functions agree up to a set of measure $0$ (with respect to either
measure). In particular,  $\mu_M$ is absolutely continuous.

 The compatibility condition \eqref{Ba} is satisfied too, since $\pi$ does,
 $$\delta^{-1}\pi(\alpha^M(\gamma))\delta=
 \pi(\alpha(\alpha^M(\gamma)))=\pi\circ\alpha^M(\alpha(\gamma)).$$

We can conclude that all results of this paper obtained so far hold for the representation
$\pi\circ \alpha^M$ as well.

\begin{thm}\label{teorema3.4}
Assume hypothesis {\bf{(A)}}.

If the system $Y=\{\delta^j\, \pi(\gamma)\psi, \;{j\in\Z},\, \gamma\in\Gamma\}$
is linearly dependent, then $V_0=\overline{\text{span}}\{\delta^j\, \pi(\gamma)\psi, \; j<0,\,
\gamma\in\Gamma\}$ is $\pi\circ\alpha^M$-invariant for some $M\geq 0.$

Moreover, if we
consider the  unitary representation
$$\pi_M:=\pi\circ\alpha^M:\Gamma\rightarrow{\mathcal U}({\mathcal H}),$$
and the subrepresentation $\widetilde{\pi_M}$ on $V_0$, with
corresponding measure $\widetilde{\mu_M}$ and multiplicity function $\widetilde{{m}_M},$ we have
$$
\widetilde{{m}_M}(\chi)<+\infty,\quad \widetilde{\mu_M}-a.e.\, \chi\in\widehat{\Gamma}.
$$
\end{thm}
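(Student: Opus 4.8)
The plan is to convert the single linear dependence relation into a pointwise (in the dual variable) system of scalar identities for the $T$-transforms of the dilates $\delta^{-k}\psi$, and to exploit the non-vanishing of the associated symbol to collapse $V_0$ onto a finitely generated $\pi\circ\alpha^M$-invariant subspace. First I would extract and normalize the relation: if $\sum_{j\in\mathbb Z}\sum_\gamma c_{j,\gamma}\delta^j\pi(\gamma)\psi=0$ with finitely many nonzero coefficients, then applying a suitable power of $\delta$ I may assume the largest level carrying a nonzero coefficient is $j=0$, and I let $L\ge 0$ be the depth, so only levels $-L,\dots,0$ occur. Writing $w=\sum_\gamma c_{0,\gamma}\pi(\gamma)\psi$, the relation reads $w=-\sum_{k=1}^L\sum_\gamma c_{-k,\gamma}\delta^{-k}\pi(\gamma)\psi\in V_0$. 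Since the $c_{0,\gamma}$ are not all zero, hypothesis {\bf{(A)}} together with Lemma \ref{kuty} forces $w\neq 0$; in particular $L\ge 1$, for $L=0$ would contradict the linear independence of $\{\pi(\gamma)\psi\}$. I set $M=L$.

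Using the compatibility relation in the form $\delta^{-k}\pi(\gamma)=\pi(\alpha^k(\gamma))\delta^{-k}$, every negative dilate rewrites as $\delta^{-k}\pi(\gamma)\psi=\pi(\alpha^k(\gamma))\delta^{-k}\psi$, turning the relation into one among the single vectors $\delta^{-k}\psi$. Applying $T$ and using $T(\pi(\beta)v)(\chi)=(\beta,\chi)Tv(\chi)$ yields, $\mu$-a.e.,
$$p_0(\chi)\,T\psi(\chi)+\sum_{k=1}^L p_{-k}\big((\alpha^{\ast})^k\chi\big)\,T(\delta^{-k}\psi)(\chi)=0,$$
where $p_{-k}(\chi)=\sum_\gamma c_{-k,\gamma}(\gamma,\chi)$. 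By condition (ii) of Lemma \ref{kuty} each nonzero symbol is different from zero $\lambda$-a.e.; since $\alpha^{\ast}$ preserves $\lambda$, the deepest coefficient $p_{-L}((\alpha^{\ast})^L\chi)$ is nonzero a.e., and I may solve for $T(\delta^{-L}\psi)(\chi)$ as an explicit scalar combination of $T\psi(\chi),\dots,T(\delta^{-(L-1)}\psi)(\chi)$. Applying $\delta^{-m}$ to the vector relation before transforming produces, for every $m\ge 0$, the shifted identity relating $T(\delta^{-m}\psi),\dots,T(\delta^{-(m+L)}\psi)$ with the same nonvanishing deepest coefficient; a downward induction on the level then shows that for every $n\ge 1$ the function $T(\delta^{-n}\psi)(\chi)$ lies, pointwise a.e., in the span of $\{T(\delta^{-i}\psi)(\chi):0\le i\le L\}$. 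Finally, since $Tw(\chi)=p_0(\chi)T\psi(\chi)$ with $p_0\neq0$ a.e. and $w\in V_0$, the level-$0$ datum $T\psi(\chi)$ may be replaced by $p_0(\chi)^{-1}Tw(\chi)$, keeping every generator inside $V_0$.

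This is where the main work lies. The pointwise span membership, fed into Theorem \ref{hel} through its criterion \eqref{setV} and into Corollary \ref{coro1}, should show that $V_0$ coincides with the $\pi\circ\alpha^M$-invariant subspace generated by the finite family $\{w\}\cup\{\delta^{-i}\pi(\rho)\psi:1\le i\le L,\ \rho\in R_i\}$, where $R_i$ is a finite set of coset representatives for $\alpha^{M-i}(\Gamma)$ in $\Gamma$ supplied by Lemma \ref{lemma1}. The delicate point, and the step I expect to be the main obstacle, is the bookkeeping that converts the scalar combination relations into genuine $\pi\circ\alpha^M$-invariance of $V_0$ (rather than mere containment in a larger $\pi$-invariant space) and that recovers all off-lattice coset translates at each level; it is precisely the a.e. non-vanishing of the symbol, via Lemma \ref{kuty}, that upgrades the finitely many scalar relations to full span membership at the level of the range functions.

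Granting this, $V_0$ is finitely generated as a $\pi\circ\alpha^M$-invariant space, so Proposition \ref{dim} gives $\dim J_{V_0}(\chi)\le\sharp\mathscr A<+\infty$ for $\mu$-a.e. $\chi$, and Proposition \ref{dimult} then yields $\widetilde{m_M}(\chi)=\dim\widetilde J(\chi)\le\dim J_{V_0}(\chi)<+\infty$ for $\widetilde{\mu_M}$-a.e. $\chi$, which is the asserted finiteness. As a consistency check, since $V_0$, $V_1=\delta(V_0)=\overline{V_0+S}$ and the cyclic space $S=\overline{\text{span}}\{\pi(\gamma)\psi\}$ are then all $\pi\circ\alpha^M$-invariant, Lemma \ref{formulaB} gives
$$\sum_{\alpha^{\ast}(\xi)=\chi}\widetilde{m_M}(\xi)=m^{\sharp}(\chi)\le\widetilde{m_M}(\chi)+N^M,$$
the bound $N^M$ coming from restricting the multiplicity-one cyclic representation on $S$ to the finite-index subgroup $\alpha^M(\Gamma)$.
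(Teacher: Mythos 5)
Your proposal does not close the central step of the theorem, and you say so yourself: everything needed for the conclusion (the $\pi\circ\alpha^M$-invariance of $V_0$ and its finite generation over $\pi\circ\alpha^M$) is deferred to ``the bookkeeping that converts the scalar combination relations into genuine $\pi\circ\alpha^M$-invariance of $V_0$''. That bookkeeping is not a technicality; it is the theorem. The elimination scheme you set up --- solving the transformed relation for the deepest dilate $T(\delta^{-L}\psi)(\chi)$ and inducting downward --- at best shows, via Theorem \ref{hel}, that each $\delta^{-n}\psi$ lies in the $\pi$-invariant space $W$ generated by $\psi,\delta^{-1}\psi,\dots,\delta^{-(L-1)}\psi$. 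But $W$ is strictly larger than $V_0$ in general: the level-$n$ generators of $V_0$ are $\delta^{-n}\pi(\gamma)\psi=\pi(\alpha^{n}(\gamma))\delta^{-n}\psi$, i.e.\ only the $\alpha^{n}(\Gamma)$-suborbit of $\delta^{-n}\psi$, whereas $W$ contains the full $\Gamma$-orbits of each $\delta^{-i}\psi$ (and the level-$0$ orbit of $\psi$, which does not lie in $V_0$ at all). Containment of $V_0$ in a larger invariant space yields neither invariance of $V_0$ itself nor a finite generating set \emph{for $V_0$}, so Propositions \ref{dim} and \ref{dimult} cannot yet be invoked; and your substitution of $p_0(\chi)^{-1}Tw(\chi)$ for $T\psi(\chi)$ does not repair this, because the $\Gamma$-orbits of $\delta^{-1}\psi,\dots,\delta^{-(L-1)}\psi$ still leave $V_0$.

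The missing idea is to normalize in the opposite direction and eliminate the \emph{shallowest} level rather than the deepest. Arrange the relation so that the smallest occurring index is $j=0$ and the largest is $M$; then $f:=\sum_\gamma c_{0,\gamma}\pi(\gamma)\psi$ belongs to $V_{1,M}=\overline{\text{span}}\{\delta^j\pi(\gamma)\psi,\ 1\le j\le M,\ \gamma\in\Gamma\}$, and the crucial point is that $V_{h,k}$ is $\pi$-invariant precisely when $h\ge 0$, by \eqref{Ba1}; with your normalization the corresponding span $V_{-L,-1}$ is \emph{not} $\pi$-invariant, which is why Corollary \ref{coro1} is unavailable on your route. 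Since $Tf(\chi)=p_0(\chi)\,T\psi(\chi)$ with $p_0\neq 0$ a.e.\ (by {\bf(A)} and Lemma \ref{kuty}(ii)), Corollary \ref{coro1} gives $\psi\in V_{1,M}$, hence $V_{0,M}=V_{1,M}$; applying powers of $\delta$ and a short induction gives $V_{r,M}=V_{1,M}$ for all $r\le 0$, so $V_0=\delta^{-(M+1)}(V_{M+1})=V_{-M,-1}$ is spanned by the levels $-M\le j\le -1$ alone. For those levels $\pi\circ\alpha^M(\eta)\,\delta^j\pi(\nu)\psi=\delta^j\pi(\alpha^{M+j}(\eta)\nu)\psi$ with $M+j\ge 0$, which yields the $\pi\circ\alpha^M$-invariance directly, and Lemma \ref{lemma1} converts the finitely many levels into a finite generating set. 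From that point on, your final step (Proposition \ref{dim} followed by Proposition \ref{dimult}, applied to the representation $\pi\circ\alpha^M$ and its transform $T_M$) coincides with the paper's and is correct; the closing ``consistency check'' via Lemma \ref{formulaB} is harmless but plays no role.
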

\begin{proof}
The first part follows the proof of Theorem 3.4 in \cite{BoSp}.

If the system $Y$ is linearly dependent, there exists a finite set $F\subset\Gamma$
and a finite non zero sequence
$c_{j,\gamma}\in\C,$ $j\in\Z,$ $\gamma\in F,$ such that
\begin{equation}\label{sommazero}
0=\sum_{j\in\mathbb{Z}}\sum_{\gamma\in F}{c_{j,\gamma}\,\delta^j\, \pi(\gamma)\psi}.
\end{equation}
After several applications of $\delta,$ we can assume that the smallest $j$ in the
sum \eqref{sommazero} is $0.$ Call the largest $M$. Hence
$$0=\sum_{j=0}^M\sum_{\gamma\in F}{c_{j,\gamma}\,\delta^j\, \pi(\gamma)\psi}.$$
We define
\begin{equation}\label{effe}
f:=\sum_{\gamma\in F}{c_{0,\gamma}\, \pi(\gamma)\psi}=
-\sum_{j=1}^M\sum_{\gamma\in F}{c_{j,\gamma}\,\delta^j\, \pi(\gamma)\psi}.
\end{equation}
For any $h,k\in\Z,$ we consider the subspaces in ${\mathcal H}$
$$
V_{h,k}=\overline{\text{span}}\{\delta^j\, \pi(\gamma)\psi, \;h\leq j\leq k,\,
\gamma\in\Gamma\},$$ and we note first that $f\in V_{1,M},$ and
$\delta(V_{h,k})=V_{h+1,k+1};$
secondly, each $V_{h,k}$ is $\pi$-invariant whenever $h\geq 0.$ Indeed, by
\eqref{Ba1},
$$\pi(\eta)\delta^j\, \pi(\gamma)\psi=
\delta^j\pi(\alpha^j(\eta))\, \pi(\gamma)\psi=
\delta^j\pi(\alpha^j(\eta)\gamma)\psi\in V_{h,k}.$$
By \eqref{effe} and \eqref{inter}, we get, for all $i$ and $\mu$-a.e. $\chi\in\sigma_i,$
$$
[Tf]_i(\chi)=\sum_{\gamma\in\Gamma}{c_{0,\gamma}\,[T(\pi(\gamma)\psi)]_i(\chi)}
=\left(\sum_{\gamma\in\Gamma}{c_{0,\gamma}\,(\gamma,\chi)}\right)[T(\psi)]_i(\chi).
$$
By (ii) of Lemma \ref{kuty}, the hypothesis ({\bf{A}}) on linear independence of translates implies that
$$\sum_{\gamma\in\Gamma}{c_{0,\gamma}\,(\gamma,\chi)}\neq 0,\quad\lambda\text{-a.e.}
\, \chi\in\widehat\Gamma,$$
($\lambda$ is the Haar measure) hence, by absolute continuity, $\mu$-a.e.
$$[T(\psi)]_i(\chi)=\frac{1}{\sum_{\gamma\in\Gamma}{c_{0,\gamma}\,(\gamma,\chi)}}
\,[Tf]_i(\chi),$$
and we obtain, by Corollary \ref{coro1}, that $\psi\in V_{1,M}.$

Therefore,
since $V_{1,M}$ is $\pi$-invariant, $\{\pi(\gamma)\psi,\,\gamma\in\Gamma\}\subset V_{1,M},$
yielding $V_{0,M}\subset V_{1,M}$, and so $V_{1,M}=V_{0,M}.$ By several application
of $\delta^k,$ we get also $V_{k+1,M+k}=V_{k,M+k}.$ The argument goes on as in
 the proof of \cite{BoSp}, we include it for completeness.
 By induction it is proved that
 $$ V_{r,M}=V_{1,M},\quad \text{for all}\, r\leq 0.$$

 Indeed, by above the statement is true for $r=0.$ Suppose it is true for  $r+1\leq 0,$  and
 consider $r\leq 0$, then obviously $V_{r+1,M+r}\subset V_{r+1,M}$ and we have
 $$V_{r,M+r}=V_{r+1,M+r}\subset V_{r+1,M}=V_{1,M},$$
 the latter equality being the induction hypothesis.

 So the inclusion
 \begin{eqnarray*}
 \{\delta^j\pi(\gamma)\psi,\, r\leq j\leq M,\,\gamma\in\Gamma\}&\subset&
 \{\delta^j\pi(\gamma)\psi,\,r\leq j\leq M+r,\,\gamma\in\Gamma\}\cup\\
 & & \{\delta^j\pi(\gamma)\psi,\,r+1\leq j\leq M,\,\gamma\in\Gamma\}
 \end{eqnarray*}
 implies, since $r\leq 0,$
 \begin{eqnarray*}
 V_{r,M}&=&\overline{\text{span}}
 \{\delta^j\pi(\gamma)\psi,\, r\leq j\leq M,\,\gamma\in\Gamma\}\\
 &\subset& V_{1,M}\cup V_{r+1,M}=V_{1,M}\cup V_{1,M}=V_{1,M}\subset V_{r,M},
 \end{eqnarray*}
 as needed.

 Hence we obtain
 $$V_{M+1}=\overline{\text{span}}\{\delta^j\pi(\gamma)\psi,\, j\leq M,\,
 \gamma\in\Gamma\}=\bigcup_{r\leq 0}V_{r,M}=V_{1,M},$$
 and $V_0=\delta^{-(M+1)}(V_{M+1})=V_{-M,-1}.$

 Now we prove that $V_0$ is $\pi\circ\alpha^M$-invariant. Indeed, for
 $-M\leq j\leq -1,$ and $\eta,\nu\in\Gamma,$ by  equality \eqref{Ba1} we have
 \begin{eqnarray*}
 \pi\circ\alpha^M(\eta)(\delta^j\pi(\nu)\psi)&=&
 \delta^{-M}\pi(\eta)\delta^{M}(\delta^j\pi(\nu)\psi)=
 \delta^{-M}\pi(\eta)\delta^{M+j}\pi(\nu)(\psi)\\
 &=&
 \delta^{-M}\delta^{M+j}\pi(\alpha^{M+j}(\eta))\pi(\nu)(\psi)=
  \delta^{j}\pi(\alpha^{M+j}(\eta)\nu)(\psi),
  \end{eqnarray*}
and the latter is again in $V_{-M,-1}=V_0.$ Furthermore,  by Lemma \ref{lemma1},
any element of $\Gamma$ is of the form $\alpha^{M+j}(\eta)\nu$, for $\nu$ varying
in a finite set and $\eta\in\Gamma,$ hence by the above equality we get also that
$$V_0=\overline{\text{span}}\{\pi\circ\alpha^M(\eta)(\delta^j\pi(\nu_i)\psi),
\,-M\leq j\leq -1,\, i=1,\dots,N,\,\eta\in\Gamma\}.$$

Finally let us consider the unitary representation
$\pi\circ\alpha^M:\Gamma\rightarrow\mathcal{U}({\mathcal H}),$
the associate unitary map ${T}_M$ as defined in \eqref{ti}
together with the measure $\mu_M.$ By Theorem \ref{hel},
if $J_{V_0,M}$ denotes a $\mu_M$ measurable range function corresponding to $V_0$
we have, $\mu_M$-a.e. $\chi\in\widehat\Gamma,$
$$J_{V_0,M}(\chi)=\overline{\text{span}}\{T_M(\delta^j\pi(\nu_i)\psi)(\chi),\,
-M\leq j\leq -1,\, i=1,\dots,N\}.$$
By Proposition \ref{dim}, we have $\dim J_{V_0,M}(\chi)<+\infty,$
and so, by Proposition \ref{dimult}, we get  $\widetilde{m_M}(\chi)<+\infty, $
$\widetilde{\mu_M}$-a.e., as required.
 \end{proof}

\begin{thm}\label{lemma3.5}
Let $\pi$ be a unitary representation of $\Gamma$ on ${\mathcal H}$ verifying \eqref{Ba},
and suppose that hypothesis {\bf{(A)}} holds true.
Assume $V\subset{\mathcal H}$ is $\pi$-invariant and $V=\delta V$.

Let $\mu$ and $\widetilde{\mu}$ denote the obvious measures
and assume that $\mu$ is absolutely continuous.
Let
 $\widetilde{m}$ be the multiplicity function associated with the subrepresentation of
$\pi$ on $V$. Then  we have, for $\widetilde{\mu}$-almost all
$\chi\in\widehat{\Gamma}$,
 $\widetilde{m}(\chi)=+\infty.$
\end{thm}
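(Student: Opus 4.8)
The plan is to exploit the key formula \eqref{formulaBa} from Lemma \ref{formulaB}, which relates the multiplicity function of a $\pi$-invariant subspace $V$ to that of its dilate $W=\delta(V)$. Since we are told $V=\delta(V)$, the subspace $V$ is its own dilate, so applying Lemma \ref{formulaB} with $W=V$ should yield a self-referential equation that the single multiplicity function $\widetilde{m}$ must satisfy. Concretely, writing $\widetilde{m}$ for the multiplicity function of the subrepresentation of $\pi$ on $V$ and $m^{\sharp}$ for that of $W=\delta(V)$, formula \eqref{formulaBa} gives $m^{\sharp}(\chi)=\sum_{\alpha^{\ast}(\xi)=\chi}\widetilde{m}(\xi)$ for $\mu^{\sharp}$-almost all $\chi$. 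Because $V=\delta(V)$, the subrepresentations of $\pi$ on $V$ and on $W$ are literally the same, so $m^{\sharp}=\widetilde{m}$ (up to a set of measure zero, after identifying the associated measures). Thus I would first argue carefully that $V=\delta(V)$ forces $m^{\sharp}$ and $\widetilde{m}$ to agree almost everywhere, obtaining the fixed-point relation
\begin{equation}\label{fixedpt}
\widetilde{m}(\chi)=\sum_{\alpha^{\ast}(\xi)=\chi}\widetilde{m}(\xi),\quad\widetilde{\mu}\text{-a.e.}\ \chi\in\widehat{\Gamma}.
\end{equation}

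Next I would analyze the consequences of \eqref{fixedpt}. The sum on the right runs over the $N$ preimages of $\chi$ under $\alpha^{\ast}$ (recall $|\ker\alpha^{\ast}|=N>1$), one of which is a preimage giving back a value comparable to $\widetilde{m}(\chi)$ itself. Intuitively, \eqref{fixedpt} says that the value at $\chi$ equals the sum of the values at all $N$ of its $\alpha^{\ast}$-preimages; iterating $\alpha^{\ast}$ and using that $\alpha^{\ast}$ is measure-preserving, one sees that a finite value of $\widetilde{m}$ on a positive-measure set is incompatible with \eqref{fixedpt} unless $\widetilde{m}$ is identically zero there. I would make this rigorous by supposing, for contradiction, that the set where $\widetilde{m}$ is finite and nonzero has positive $\widetilde{\mu}$-measure. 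Integrating \eqref{fixedpt} against the Haar measure $\lambda$ and using that $\alpha^{\ast}$ is $N$-to-one and measure-preserving, the relation forces the integral of $\widetilde{m}$ over a suitable set to equal $N$ times itself, which (for a finite positive quantity) is absurd since $N>1$. The ergodicity of $\alpha^{\ast}$ should then be used to propagate finiteness-or-zero across the whole of $\widehat{\Gamma}$, ruling out the possibility that $\widetilde{m}$ is finite and positive on a positive-measure set while vanishing elsewhere.

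The main obstacle I anticipate is handling the measure-theoretic bookkeeping in \eqref{fixedpt} correctly, in two respects. First, formula \eqref{formulaBa} is stated ``$\mu^{\sharp}$-a.e.'', and one must check that the measures $\widetilde{\mu}$ and $\mu^{\sharp}$ coincide (up to equivalence) under the hypothesis $V=\delta(V)$, so that the almost-everywhere identification of $m^{\sharp}$ with $\widetilde{m}$ is legitimate; this requires unwinding how $\delta$ acts on the spectral data, precisely as in the remark preceding Theorem \ref{teorema3.4} that $\pi\circ\alpha^M$ is unitarily equivalent to $\pi$ via $\delta^M$. Second, the argument that \eqref{fixedpt} forces $\widetilde{m}=+\infty$ needs the ergodicity of $\alpha^{\ast}$ in an essential way: on a set of positive measure where $\widetilde{m}$ is finite, the relation $\widetilde{m}(\chi)=\sum_{\alpha^{\ast}(\xi)=\chi}\widetilde{m}(\xi)\ge \widetilde{m}(\xi_0)$ for each preimage, combined with invariance of the finite-level sets under $\alpha^{\ast}$ and ergodicity, should force those level sets to have measure zero.

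I would therefore define, for each $n\in\N$, the set $A_n=\{\chi:\widetilde{m}(\chi)=n\}$ and the set $A_{\infty}=\{\chi:\widetilde{m}(\chi)=+\infty\}$, and show that \eqref{fixedpt} makes $A_{\infty}$ invariant (up to measure zero) under $\alpha^{\ast}$ while forcing each $A_n$ with $0<n<\infty$ to be $\alpha^{\ast}$-almost-invariant; ergodicity then gives each such $A_n$ either full or null measure, and the fixed-point relation together with $N>1$ excludes the full-measure case, so $\widetilde{\mu}(A_n)=0$ for every finite $n\ge 1$. Since $\widetilde{m}\ge 1$ almost everywhere on the support of $\widetilde{\mu}$ (as $V\neq\{0\}$ forces a nontrivial subrepresentation), this leaves $\widetilde{m}=+\infty$ for $\widetilde{\mu}$-almost all $\chi$, which is exactly the claim. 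The delicate point to get right is the almost-invariance of the level sets, which I expect to follow from \eqref{fixedpt} by noting that if $\widetilde{m}(\chi)=n<\infty$ then every preimage value $\widetilde{m}(\xi)$ is finite and their sum over the $N$ preimages equals $n$, constraining the values severely.
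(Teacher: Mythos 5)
Your proposal follows essentially the same route as the paper: apply Lemma \ref{formulaB} with $W=\delta(V)=V$ to obtain the self-referential identity $\widetilde{m}(\chi)=\sum_{\alpha^{\ast}(\xi)=\chi}\widetilde{m}(\xi)$, then combine measure-preservation and ergodicity of $\alpha^{\ast}$ with $N>1$ to force $\widetilde{m}=+\infty$ a.e.; the paper's ergodic step uses the single invariant set $E=\{\widetilde{m}\geq 1\}$ and then iterates $\widetilde{m}\geq N^k$, which is slightly cleaner than your level sets $A_n$ (note the exact level sets are not $\alpha^{\ast}$-invariant, only the sublevel sets $\{\widetilde{m}\leq n\}$ satisfy $(\alpha^{\ast})^{-1}(\{\widetilde{m}\leq n\})\subset\{\widetilde{m}\leq n\}$), but this is a repairable detail and the argument goes through.
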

\begin{proof}

Since $V=\delta V,$ the multiplicity function, $m^{\sharp}$, associated with the subrepresentation of $\pi$ on $\delta V$ coincides with $\widetilde{m}.$
 Hence, by \eqref{formulaBa}   we get
 $\widetilde{\mu}$-almost
all $\chi\in\widehat{\Gamma},$
\begin{equation}\label{mtilde}
\widetilde{m}(\chi)=m^{\sharp}(\chi)=\sum_{\alpha^{\ast}(\xi)=\chi}{\widetilde{m}(\xi)}.
\end{equation}

Also, since  the measure $\widetilde{\mu}$ is absolutely continuous with respect to the Haar
measure  $\lambda$,
as discussed at the end of Section \ref{Hypo}, we can take  $\widetilde{\mu}$ to be
the restriction of $\lambda$  to the subset
$\widetilde{\sigma}_1$ provided by \eqref{sigma1}.

The proof follows now the same standard ergodic argument as in
\cite[Lemma 3.5]{BoSp}.

Let $E=\{\chi\in\widehat{\Gamma},\, \widetilde{m}(\chi)\geq 1\}$. Then
$E\subset\widetilde{\sigma}_1$ and
$
\widetilde{\mu}(E)=\lambda(E\cap\widetilde{\sigma}_1)
=\lambda(E).
$

By \eqref{mtilde} the inclusion
 $E\subset(\alpha^{\ast})^{-1}(E)$ holds $\widetilde{\mu}$-a.e.,
hence, by above, $\lambda$-a.e.

Since $\alpha^{\ast}$ is measure-preserving,in the sense that
$\lambda((\alpha^{\ast})^{-1}(E))=\lambda(E),$ we have,
modulo $\lambda$-null-sets,  $E=(\alpha^{\ast})^{-1}(E).$
Since $\alpha^{\ast}$ is ergodic with respect to Haar measure, we must have
either $\lambda(E)=0$ or $\lambda(E)=1$, and since $V\neq \{0\},$ it follows that
$\lambda(E)=1$.

So $\widetilde{m}(\chi)\geq 1$ for $\lambda$-a.e. $\chi\in\widehat{\Gamma}.$
From \eqref{mtilde} above and the fact that
all elements $\xi$ verifying  $\alpha^{\ast}(\xi)=\chi$ yield the same coset
in $\widehat{\Gamma}/\ker \alpha^{\ast},$
it follows first that $\widetilde{m}(\chi)\geq N>1$ for $\lambda$-a.e. $\chi\in\widehat{\Gamma},$ ($N=|\ker \alpha^{\ast}|$) and secondly
 $\widetilde{m}(\chi)=+\infty$ for $\lambda$-a.e. $\chi\in\widehat{\Gamma}.$
The fact that $\widetilde{\mu}\ll\mu,$
see Remark \ref{rap}, complete the proof.
\end{proof}
{\bf Proof of Theorem \ref{main1}}
By hypothesis $V_0$ is $\pi$-invariant.\\
If $V_0\neq V_1=\delta(V_0)$ then Theorem \ref{primali} yields the linear independence of $Y$.\\
If $V_0=V_1,$ then the restriction of $\delta$ to $V_0$, say $\delta_{V_0}$, is a
unitary map onto $V_0.$

If we assume that $Y$ is linear dependent,
by Theorem \ref{teorema3.4} there exists an $M\geq 0$ such that $V_0$ is
$\pi_M:=\pi\circ\alpha^M$-invariant, and the multiplicity function  $\widetilde{m_M}$
verifies $\widetilde{m_M}(\chi)<+\infty,$ for $\widetilde{\mu_M}$-almost all $\chi\in\widehat{\Gamma}$.

But the $\pi_M$-invariance implies that
the subrepresentations $\widetilde{\pi_M}$ and $\widetilde{\pi}$ on $V_0$ are equivalent,
$\delta_{V_0}^M$ being the intertwining operator, and so the corresponding measures
$\widetilde{\mu_M}$  and $\widetilde\mu$ are
equivalent. Furthermore the multiplicity functions agree up to a set of measure $0$ (with respect to either
measure), so
$\widetilde{m}(\chi)<+\infty,$ for $\widetilde\mu$-almost all $\chi\in\widehat{\Gamma}$,
leading to a contradiction of  Theorem \ref{lemma3.5}.
\qed

{\bf Proof of Theorem \ref{main2}}\\
It follows by Theorem \ref{parse} and Theorem \ref{main1}.
\qed
\vspace{.5cm}\\
We conclude the paper with a non trivial example. 
\begin{ex}

 Fix an integer $a>1$ and real numbers $b, c>0$. Let $\psi\in L^2(\mathbb{R})$ such
that $\text{supp}(\widehat{\psi})\subset[c,c+b^{-1})$ and
$\displaystyle{\sum_{n\in\mathbb{Z}}|\widehat{\psi}(a^n\xi)|^2}=b$ for almost every $\xi\geq0.$
The system $a^{n/2}\psi (a^nx-bk),$ $k,n\in\Z$, is a Parseval frame for the Hilbert space
$H^2_+(\R)=\{f\in\ld,\;\text{supp}\widehat{f}\subset[0,+\infty)\},$
see  Heil's book \cite[ex.12.3]{Hei}.

So we can take $G=\R$, $\Gamma=b\Z$ acting by translations on $\mathcal{H}=H^2_+(\R)$,
$\delta=D_a,$ and $\alpha(bk)=abk.$ All conditions required by Theorem \ref{main2} are satisfied.  Hence we can conclude that
the system  $\{a^{n/2}\psi (a^n\cdot-bk)=\delta^n\pi(bk)\psi,\;n,k\in\Z\}$ is linearly
independent.

\end{ex}

\end{document}